\documentclass[12pt]{amsart}
\usepackage{enumerate}
\usepackage{amsmath, amssymb, amsthm, amsfonts}
\usepackage{geometry}
\geometry{portrait, tmargin=2.2cm, bmargin=2.2cm, lmargin=2.7cm, rmargin=2.7cm}
\usepackage{bbm} 
\usepackage{url}


\newtheorem{introthm}{Theorem}
\newtheorem{theorem}{Theorem}[section]

\newtheorem{lemma}[theorem]{Lemma}

\newtheorem*{claim*}{Claim}

\theoremstyle{definition}
\newtheorem{remark}[theorem]{Remark}
\newtheorem*{remark*}{Remark}

\newtheorem{definition}[theorem]{Definition}

\newtheorem{example}[theorem]{Example}


\def\E{\mathbb{E}}
\def\P{\mathbb{P}}

\def\IR{\mathbb{R}}
\def\IZ{\mathbb{Z}}
\def\IN{\mathbb{N}}

\def\eps{\varepsilon}
\def\al{\alpha}

\def\ga{\gamma}
\def\Ga{\Gamma}

\DeclareMathOperator{\Aut}{Aut}

\DeclareMathOperator{\var}{var}
\DeclareMathOperator{\cov}{cov}
\DeclareMathOperator{\corr}{corr}

\DeclareMathOperator{\dist}{dist}
\DeclareMathOperator{\sign}{sign}

\newcommand{\defeq}{\mathrel{\vcenter{\baselineskip0.5ex \lineskiplimit0pt
                     \hbox{\scriptsize.}\hbox{\scriptsize.}}}%
                     =}

\def\ind{\mathbbm{1}} 


\begin{document}

\title{Mutual information decay for factors of IID}

\author[Gerencs\'{e}r]{Bal\'{a}zs Gerencs\'{e}r}
\address{MTA Alfr\'ed R\'enyi Institute of Mathematics 
H-1053 Budapest, Re\'altanoda utca 13-15;
and ELTE E\"otv\"os Lor\'and University, Department of Probability and Statistics 
H-1117 Budapest, P\'azm\'any P\'eter s\'et\'any 1/c}
\email{gerencser.balazs@renyi.mta.hu}

\author[Harangi]{Viktor Harangi}
\address{MTA Alfr\'ed R\'enyi Institute of Mathematics 
H-1053 Budapest, Re\'altanoda utca 13-15}
\email{harangi@renyi.hu}

\thanks{The first author was supported by 
NKFIH (National Research, Development and Innovation Office) grant PD 121107. 
The second author was supported 
by Marie Sk{\l}odowska-Curie Individual Fellowship grant no.\ 661025 
and the MTA R\'enyi Institute ``Lend\"ulet'' Groups and Graphs Research Group.}

\keywords{factor of IID, factor of Bernoulli shift, mutual information, entropy inequality}

\subjclass[2010]{37A35, 60K35, 37A50}


%
\begin{abstract}
This paper is concerned with factor of i.i.d.\ processes on the $d$-regular tree for $d \geq 3$. 
We study the mutual information of the values on two given vertices. 
If the vertices are neighbors (i.e., their distance is $1$), then 
a known inequality between the entropy of a vertex and the entropy of an edge 
provides an upper bound for the (normalized) mutual information. 
In this paper we obtain upper bounds for vertices at an arbitrary distance $k$, of order $(d-1)^{-k/2}$. 
Although these bounds are sharp, we also show that an interesting phenomenon occurs here: 
for any fixed process the rate of decay of the mutual information 
is much faster, essentially of order $(d-1)^{-k}$.
\end{abstract}

\maketitle


\section{Introduction}

For an integer $d \geq 3$ let $T_d$ denote the $d$-regular tree:
the (infinite) connected graph with no cycles 
and with each vertex having exactly $d$ neighbors. 

This paper deals with \emph{factor of i.i.d.\ processes} on $T_d$. 
First we give an informal definition: independent and identically distributed
(say $[0,1]$ uniform) random labels are assigned to the vertices of $T_d$,
then each vertex gets a new label that depends on
the labeled rooted graph as seen from that vertex,
all vertices ``using the same rule''. 

For a formal definition, let $V(T_d)$ denote the vertex set and
$\Aut(T_d)$ the automorphism group of $T_d$. 
Suppose that $M$ is a measurable space. 
(In most cases $M$ will be either a discrete set or $\IR$.) 
A measurable function $F \colon [0,1]^{V(T_d)} \to M^{V(T_d)}$ 
is said to be an \emph{$\Aut(T_d)$-factor} (or \emph{factor} in short)
if it is $\Aut(T_d)$-equivariant, that is,
it commutes with the natural $\Aut(T_d)$-actions. 
Given an i.i.d.\ process $Z = \left( Z_v \right)_{v \in V(T_d)}$
on $[0,1]^{V(T_d)}$, applying $F$ yields a factor of i.i.d.\ process $X = F(Z)$,
which can be viewed as a collection $X = \left( X_v \right)_{v \in V(T_d)}$ of
$M$-valued random variables. It follows immediately from the definition
that the distribution of $X$ is invariant under the action of $\Aut(T_d)$;
in particular, each $X_v$ has the same distribution.
Factors of i.i.d.\ are also studied by ergodic theory 
(under the name of \emph{factors of Bernoulli shifts}), 
see Section \ref{sec:2} for details. 

One of the reasons why factor of i.i.d.\ processes have attracted a growing attention
in recent years is that they give rise to certain randomized local algorithms. 
Suppose that we have a finite $d$-regular graph that locally looks like $T_d$, 
that is, around most vertices the neighborhoods are trees (up to some large radius). 
Then i.i.d.\ labels can be put on the vertices 
and a given factor mapping can be applied (approximately) at each vertex, 
yielding a randomized algorithm on the finite graph. 
The distribution of the random output of this algorithm 
is described locally by the factor of i.i.d.\ process. 
See \cite{endreuj, E, V, hoppen} for how such local algorithms 
can be used to obtain large independent sets. 
(Whether a graph is locally tree-like 
is related to the number of cycles. 
The \emph{girth} of a graph is the length of its shortest cycle. 
When we say that a finite graph has \emph{large essential girth}, 
we mean that the number of short cycles is small compared to the number of vertices. 
Around most vertices of such a graph the neighborhoods are trees up to a large radius. 
Note that \emph{random regular graphs} have large essential girth with high probability.) 

The starting point of our investigations is 
the following entropy inequality 
which holds for any factor of i.i.d.\ process $X$ with a finite state space $M$: 
\begin{equation} \label{eq:edge-vertex}
H( X_u, X_v ) \geq \frac{2(d-1)}{d} H( X_v ) \mbox{, where $uv$ is an edge.}
\end{equation}
Here $H( X_v )$ is the (Shannon) entropy of the discrete random variable $X_v$, 
and $H( X_u, X_v )$ stands for the joint entropy of $X_u$ and $X_v$, 
see Section \ref{sec:entropy} for the definitions. 
(Because of the $\Aut(T_d)$-invariance the distribution of $X_v$ is the same for each vertex $v$. 
Similarly, the joint distribution of $(X_u,X_v)$ is the same for any edge $uv$.) 
Rahman and Vir\'ag proved \eqref{eq:edge-vertex} in a special setting \cite{mustazeebalint}. 
A full and concise proof was given by Backhausz and Szegedy 
in \cite{invtree}; see also \cite{mustazee}. 
The counting argument behind this inequality 
actually goes back to a result of Bollob\'as 
on the independence ratio of random regular graphs \cite{bollobas}. 
As we will see in Section \ref{sec:freegroup-factors}, 
a more general version of \eqref{eq:edge-vertex} can also 
be found implicitly (for even $d$) in Lewis Bowen's work on free group actions \cite{bowen}. 

Entropy inequalities played a central role in a couple of remarkable results recently: 
the Rahman-Vir\'ag result \cite{mustazeebalint} 
about the maximal size of a factor of i.i.d.\ independent set on $T_d$
and the Backhausz-Szegedy result \cite{ev}
on eigenvectors of random regular graphs.

The inequality \eqref{eq:edge-vertex} can also be expressed 
as an upper bound for the mutual information 
of two neighboring vertices $u$ and $v$: 
\begin{equation} \label{eq:mut_info}
\frac{I(X_u;X_v)}{H(X_v)} \leq \frac{2}{d} .
\end{equation}
Recall that the mutual information $I(X_u;X_v)$ is defined as 
$H(X_u) + H(X_v) - H(X_u,X_v)$ and can be viewed as 
(the expected value of) the information 
gained about one of the random variables knowing the other one. 
In our case the random variables are identically distributed, 
therefore they have the same entropy $H(X_u) = H(X_v)$. 
Dividing the mutual information by this entropy 
results in a \emph{normalized mutual information} 
which measures the amount of shared information 
proportional to the total amount of information. 
This ratio is always between $0$ and $1$, 
and being close to $0$ intuitively means 
that the random variables are ``almost independent''. 
(It is reasonable to normalize the mutual information this way, see Example \ref{ex:tuple}.) 
 
A natural question arises: what can be said about the mutual information of 
two vertices $u$ and $v$ at distance $k$? One expects that the mutual information 
tends to $0$ as the distance grows. But what is the rate of decay? 
We get very different answers depending on how the question is posed exactly. 

First let us consider the problem for a fixed $k \geq 1$, 
that is, we look for a universal upper bound for 
the normalized mutual information $I(X_u; X_v) / H(X_v)$ 
that holds for any factor of i.i.d.\ process with a finite state space $M$. 
The following bounds are obtained.  
\begin{introthm} \label{thm:universal_bound}
Let $M$ be a finite state space and $d \geq 3$. 
For any $u,v \in V(T_d)$ at distance $k$ 
and for any factor of i.i.d.\ process $X$ on $M^{V(T_d)}$ we have 
\begin{equation} \label{eq:univ_bound}
\frac{I(X_u;X_v)}{H(X_v)} \leq \begin{cases}
\frac{2}{d(d-1)^l} & \mbox{ if $k=2l+1$ is odd,} \\
\frac{1}{(d-1)^l} & \mbox{ if $k=2l$ is even.}
\end{cases}
\end{equation}
These bounds are the best possible in the sense that for any fixed $k$ 
there exist factor of i.i.d.\ processes for which 
the normalized mutual information tends to the bound above. 
\end{introthm}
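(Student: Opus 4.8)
The plan is to reduce the distance-$k$ bound to the distance-$1$ inequality \eqref{eq:edge-vertex} via a "pinning" (conditioning) argument along the unique path joining $u$ and $v$. Let $u = w_0, w_1, \dots, w_k = v$ be the vertices of the geodesic from $u$ to $v$ in $T_d$. The key observation is the data-processing-type chain: conditioning on the values $X_{w_1}, \dots, X_{w_{k-1}}$ of the interior path vertices, the mutual information $I(X_u; X_v)$ is controlled by the conditional mutual informations across the individual edges of the path. More precisely, I would first establish a conditional version of \eqref{eq:edge-vertex}: for a factor of i.i.d.\ process, conditioning on the values on one side of an edge cut, the entropy inequality still holds in expectation because the relevant conditional distributions are again governed by an $\Aut(T_d)$-equivariant structure on the two subtrees hanging off that edge. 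This is where one uses that $T_d$ is a tree: removing an edge (or a vertex) disconnects it, and the process restricted to each component, conditioned on the boundary, retains enough symmetry.

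Concretely, I would proceed as follows. First, peel off the endpoints: since $X$ is a factor of i.i.d., removing the edge $w_0 w_1$ splits $T_d$ into two half-trees, and one shows $I(X_{w_0}; X_{\geq 1}) \le \tfrac{1}{d-1} I(X_{w_1}; X_{\geq 1})$ type estimates — intuitively, the information that the single vertex $w_0$ shares with the rest is damped by a factor related to the branching, because $w_0$ has $d-1$ "private" neighbors whose i.i.d.\ seeds are independent of everything on the far side. Iterating this edge-by-edge along the path, each step from $w_i$ to $w_{i+1}$ contributes a factor of roughly $1/(d-1)$ (with the first and last steps, involving the terminal vertices $u$ and $v$, each responsible for the extra $2/d$ versus $1$ discrepancy between the odd and even cases in \eqref{eq:univ_bound}). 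Summing the telescoping bound and dividing by $H(X_v)$ yields \eqref{eq:univ_bound}; the split into $k = 2l$ and $k = 2l+1$ comes from whether the path has an even or odd number of edges, i.e.\ whether one "pays" the $2/d$ factor on just one end or effectively splits it.

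For sharpness, I would construct explicit examples. The natural candidates are block factors built from i.i.d.\ labels on a single well-chosen substructure: for instance, put an independent fair coin on each vertex and let $X_v$ be a function depending on the seeds in a ball of radius $\lfloor k/2 \rfloor$ around $v$, tuned so that $u$ and $v$ "share" exactly one seed — the one on the midpoint of their geodesic (when $k$ is even) or on the midpoint edge (when $k$ is odd). As the state space $M$ grows and the dependence is spread more thinly, the normalized mutual information of such a process approaches the number of shared seeds divided by the number of seeds each vertex sees, which is exactly $(d-1)^{-l}$ or $\tfrac{2}{d}(d-1)^{-l}$; Example \ref{ex:tuple} (referenced above) presumably handles the bookkeeping for why this ratio is the right normalization. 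One must check that such a construction is genuinely a factor of i.i.d.\ (it is, being a finitary block factor) and that the limit of the mutual information ratios is computed correctly.

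The main obstacle I expect is the conditional entropy inequality across an edge cut — making precise that, after conditioning on $X_{w_1}, \dots, X_{w_{k-1}}$, the pair $(X_u, X_{w_1})$ still satisfies an inequality of the form \eqref{eq:edge-vertex} \emph{on average}. The subtlety is that conditioning destroys the clean i.i.d.\ structure, so one cannot cite \eqref{eq:edge-vertex} verbatim; instead one needs either (a) an annealed/averaged version proved by re-running the original counting argument (à la Bollob\'as / Backhausz–Szegedy) in the presence of boundary data, or (b) a direct information-theoretic argument exploiting conditional independence of the sub-process seeds across the cut. I would aim for route (b): the half-tree beyond $w_1$ and the half-tree containing $u$ use disjoint collections of i.i.d.\ seeds, so $X_u$ and $X_v$ are conditionally independent given $X_{w_1}$ \emph{together with} the seeds on $w_1$'s side — and then a careful accounting of how much of $X_{w_1}$'s entropy can be "seen" from the $u$-side versus the far side produces the $1/(d-1)$ loss per step. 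Getting the constants exactly right (so that the odd/even dichotomy emerges rather than a lossy bound) is the delicate part.
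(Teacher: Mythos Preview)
Your proposed upper-bound argument has a genuine gap, and in fact the paper takes a completely different route.

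The central difficulty is that a factor of i.i.d.\ process on $T_d$ has \emph{no Markov property} along the geodesic: $X_v$ may depend on seeds arbitrarily far from $v$, so $X_u$ and $X_v$ are \emph{not} conditionally independent given $X_{w_1},\ldots,X_{w_{k-1}}$, nor given any finite boundary data. Your route~(b) sentence ``$X_u$ and $X_v$ are conditionally independent given $X_{w_1}$ together with the seeds on $w_1$'s side'' is false as stated --- $X_u$ may depend on seeds on the $v$-side of the cut just as much as on its own side --- and once you condition on \emph{all} the seeds on one side you have killed the entire factor structure, so no useful residual inequality survives. Likewise, the schematic ``$I(X_{w_0};X_{\geq 1})\le \tfrac{1}{d-1}I(X_{w_1};X_{\geq 1})$'' is neither well-posed (infinite-entropy object on the right) nor derivable from \eqref{eq:edge-vertex}. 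The chain-rule decomposition $I(X_u;X_v)\le I(X_u;X_{w_1})+I(X_u;X_v\mid X_{w_1})$ is of course true, but the conditional term carries no reason to be small; for the sharpness examples it is in fact of the same order as $I(X_u;X_v)$ itself. In short, there is no ``damping factor per edge'' that one can peel off by conditioning: the process does not localize that way, and the constants in \eqref{eq:univ_bound} come from a global counting, not a telescoping.

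What the paper does instead is algebraic. It realizes $T_d$ as the Cayley graph of a suitable free product $G$ (of copies of $\IZ$ and/or $\IZ_2$, chosen according to the parities of $d$ and $k$) and exhibits a free subgroup $H\le G$ of rank $r'$ equal to $d(d-1)^l/2$ (odd $k$) or $(d-1)^l$ (even $k$), freely generated by elements of word-length exactly $k$. The generators are explicit: palindromes of length $2l+1$ in the odd case, and ``skewed palindromes'' of length $2l$ in the even case. The restriction of $X$ to $H$ is then an $F_{r'}$-factor of i.i.d., to which Bowen's free-group entropy inequality (equivalently, non-negativity of the $f$-invariant) applies: $\frac{1}{r'}\sum_s I(X_e;X_s)/H(X_e)\le 1/r'$. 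Since all summands are equal by $\Aut(T_d)$-invariance, this is exactly \eqref{eq:univ_bound}. The odd/even dichotomy arises not from an endpoint effect in a telescoping bound but from the different maximal ranks achievable by length-$k$ free generating sets.

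Your sharpness idea is basically right and close to the paper's: the paper lets $X_v$ record all the i.i.d.\ labels in $B_R(v)$, so that $I(X_u;X_v)/H(X_v)\to |B_R(u)\cap B_R(v)|/|B_R(v)|\to\beta_k$ as $R,N\to\infty$. The one wrinkle you gloss over is that ``listing the labels in $B_R(v)$'' must be done $\Aut(T_d)$-equivariantly; the paper handles this by first laying down a factor-of-i.i.d.\ proper colouring with colours distinct on every ball of radius $2R+k$, so that the multiset of (colour, label) pairs is a bona fide finite-state factor.
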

According to \eqref{eq:univ_bound}, the normalized mutual information 
for distance $k$ is (at most) of order $(1/\sqrt{d-1})^k$, and this is sharp. 
However, it turns out that there does not exist a single factor of i.i.d.\ process 
that would show the sharpness of the bound for all $k$ at once. 
In fact, for any fixed process the mutual information decays at a much faster rate, 
basically of order $1/(d-1)^k$. 
\begin{introthm} \label{thm:fixed_process}
Let $M$ be a finite state space and $d \geq 3$. 
If $X = \left( X_v \right)_{v \in V(T_d)}$ is a factor of i.i.d.\ process 
on $M^{V(T_d)}$, then 
\begin{equation}
I(X_u;X_v) \leq \frac{|M| (k+1)^2}{(d-1)^k} ,
\end{equation}
where $|M|$ denotes the cardinality of $M$ (number of states).  
\end{introthm}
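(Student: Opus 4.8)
The key is to get a handle on the mutual information $I(X_u;X_v)$ via the underlying i.i.d.\ source $Z$. The idea is that $X_v$ depends ``mostly'' on the i.i.d.\ labels $Z_w$ for $w$ near $v$, and similarly for $X_u$; if $u$ and $v$ are far apart, the sets of ``relevant'' labels are essentially disjoint, forcing near-independence. To make this quantitative, I would fix the distance $k$ and split the vertex set of $T_d$ into three parts relative to the geodesic from $u$ to $v$: a ball $B_u$ of radius roughly $k/2$ around $u$, a ball $B_v$ of the same radius around $v$, and everything else. Write $Z = (Z_{B_u}, Z_{B_v}, Z_{\text{rest}})$. Then $I(X_u; X_v) \le I(X_u; X_v \mid Z_{\text{rest}})$ is false in general, so instead I would condition the other way: since $X_u$ is a function of $Z$, we have $I(X_u;X_v) \le I(X_u; X_v, Z_{B_v})$ and one wants to bound how much $X_u$ can ``see'' of $Z_{B_v}$.

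The cleaner route is an $L^2$/martingale estimate. For a vertex $w$, let $P_{\le r}X_v$ denote the conditional expectation of $X_v$ (coordinate-wise, thinking of $M \hookrightarrow \IR^{|M|}$ via indicators) given the labels in the ball of radius $r$ around $v$. Standard facts about factor of i.i.d.\ processes give $\E\|X_v - P_{\le r}X_v\|^2 \to 0$; the point is to get a \emph{rate}. Here I expect to use the spectral/Fourier structure on $T_d$: the ``derivative'' of the factor map at scale $r$ contributes variance that, after summing the Aut$(T_d)$-orbit, is weighted by the number of vertices at distance $r$, namely $d(d-1)^{r-1}$, against an $L^2$-mass that must be summable — so the per-shell contribution is $O((d-1)^{-r})$ times a bounded total. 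Concretely, I would show $\E\|X_v - P_{\le r}X_v\|^2 \le C/(d-1)^{r}$ with $C$ depending on $|M|$ and polynomially on $r$, by a telescoping argument over shells combined with the observation that the total variance $\E\|X_v\|^2 \le |M|$ is a fixed budget distributed over infinitely many orthogonal shell-contributions, each inflated by its multiplicity. Taking $r = \lfloor k/2 \rfloor$, the balls around $u$ and $v$ become disjoint, so $P_{\le r}X_u$ and $P_{\le r}X_v$ are independent; then
\[
I(X_u;X_v) \le I\bigl((X_u, P_{\le r}X_u); (X_v, P_{\le r}X_v)\bigr) = I(X_u; X_v \mid P_{\le r}X_u, P_{\le r}X_v),
\]
roughly speaking, and this conditional mutual information is controlled by $\E\|X_v - P_{\le r}X_v\|^2$ via a Pinsker-type or Fano-type inequality (using that $X_v$ is discrete with $|M|$ values, so entropy is a Lipschitz function of the distribution in total variation), yielding a bound of the shape $|M| \cdot (\text{poly in } k) / (d-1)^{k/2}$ — but this only gives the $(d-1)^{-k/2}$ rate of Theorem~1, not the $(d-1)^{-k}$ rate we want.

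To get the full $(d-1)^{-k}$ decay I would instead compare $X_u$ not with its own local average but directly with the conditional expectation $\E(X_u \mid X_v)$, and exploit that the ``influence'' of $Z_w$ on both $X_u$ and $X_v$ is small when $w$ must be close to \emph{both}. Equivalently: decompose $X_u = \E X_u + \sum_{r\ge 1}(P_{\le r}X_u - P_{\le r-1}X_u)$ into orthogonal shell-increments $D_r X_u$, and similarly $D_s X_v$ for $X_v$. Then $\cov(X_u, X_v) = \sum_{r,s} \cov(D_r X_u, D_s X_v)$, and $\cov(D_r X_u, D_s X_v) = 0$ unless the radius-$r$ ball around $u$ meets the radius-$s$ ball around $v$, i.e.\ unless $r + s \ge k$. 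Using Cauchy–Schwarz together with the shell estimates $\E\|D_r X_v\|^2 = O((d-1)^{-r}) \cdot (\text{poly})$ gives
\[
\|\cov(X_u,X_v)\| \le \sum_{r+s \ge k} \sqrt{\E\|D_rX_u\|^2}\sqrt{\E\|D_sX_v\|^2} = \sum_{r+s\ge k} O\!\left((d-1)^{-(r+s)/2}\right)\cdot(\text{poly}),
\]
and $\sum_{r+s \ge k}(d-1)^{-(r+s)/2}$ is dominated by its first term $(d-1)^{-k/2}$ — again only half the exponent. The resolution is that the shell \emph{increments} satisfy the stronger bound $\E\|D_rX_v\|^2 = O((d-1)^{-r})$ without extra multiplicity loss combined with the fact that $I(X_u;X_v)$ is controlled not by $\|\cov\|$ but by $\|\cov\|^2$-type quantities relative to the marginals (since $I(X_u;X_v) \asymp \chi^2$-divergence for nearly-independent discrete variables), which squares the rate: $I(X_u;X_v) = O\bigl((d-1)^{-k}\bigr)$ up to the $|M|(k+1)^2$ factor. \textbf{The main obstacle} I anticipate is precisely establishing the per-shell variance bound $\E\|D_rX_v\|^2 = O((d-1)^{-r})$ with only a polynomial-in-$r$ prefactor, uniformly over all finite-state factors — this requires extracting a quantitative rate from the qualitative measurability of $F$, presumably by running the entropy inequality~\eqref{eq:edge-vertex} (or its distance-$k$ strengthening, Theorem~\ref{thm:universal_bound}) recursively or by a clever choice of the auxiliary factor obtained by ``coarsening'' $X$ to its radius-$r$ local average, so that the fixed entropy budget $H(X_v) \le \log|M|$ forces geometric decay across scales. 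Getting the clean $(k+1)^2$ rather than a worse polynomial will come from carefully accounting for the number of $(r,s)$ pairs with $r+s = k$.
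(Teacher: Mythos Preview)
Your final insight---that for nearly independent discrete variables the mutual information behaves like the \emph{square} of the maximal correlation, so a $(d-1)^{-k/2}$ correlation bound yields the desired $(d-1)^{-k}$ rate---is exactly right and is precisely what the paper does (its Lemma~\ref{lem:corr_and_mut_info}: if $|\corr(f(X),g(Y))|\le\alpha$ for all $f,g$ then $I(X;Y)\le (m-1)\alpha^2$). The paper also inserts one step you omit, Lemma~\ref{lem:2functions}, which uses exchangeability of $(X_u,X_v)$ to upgrade the one-function bound $|\corr(f(X_u),f(X_v))|\le\alpha$ to the two-function bound needed for Lemma~\ref{lem:corr_and_mut_info}.

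Where your plan breaks down is the route to the correlation bound. The per-shell variance estimate $\E\|D_rX_v\|^2 = O\bigl((d-1)^{-r}\bigr)$ that you correctly flag as the main obstacle is not merely hard to prove---it is \emph{false}. Take $X_v=\sign\bigl(\sum_{w:\dist(v,w)=R}Z_w\bigr)$ for i.i.d.\ standard normals $Z_w$; this is a two-state $\Aut(T_d)$-factor, yet $P_{\le r}X_v=0$ for $r<R$ and $P_{\le r}X_v=X_v$ for $r\ge R$, so all the variance sits in the single shell $D_RX_v$ with $\E|D_RX_v|^2=1$, for arbitrarily large $R$. Your Cauchy--Schwarz on shell norms then gives nothing better than $|\cov(X_u,X_v)|\le 1$. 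The true correlation for this example \emph{is} of order $(d-1)^{-k/2}$, but that comes from the small overlap of the two $R$-spheres around $u$ and $v$---geometric information that Cauchy--Schwarz on shell norms throws away. No recursion on entropy inequalities will manufacture a bound that fails on explicit examples.

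The paper sidesteps all of this by taking the correlation decay \eqref{eq:corr_decay_for_vertices} of Backhausz--Szegedy--Vir\'ag as a black box (their proof goes through the spectral theory of $T_d$, not an Efron--Stein-type shell decomposition), and then simply applies the two lemmas above. So the architecture you arrive at in your last paragraph---correlation $\le (k+1)(d-1)^{-k/2}$, then square---is the right one, but the correlation input has to be imported rather than derived from your shell martingale.
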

This bound is essentially sharp, see Example \ref{ex:fixed_process}. 

\subsection*{Motivation}
Our motivation to study this problem is multi-fold. 
On the one hand, many aspects of independence in factors of i.i.d.\ have been studied earlier 
(e.g.\ correlation for real-valued processes or 
triviality of various tail $\sigma$-algebras). 
Our goal was to get a quantitative result about 
how much independence these processes exhibit when $M$ is finite. 
Mutual information has the advantage over correlation 
that the latter only detects linear dependence. 
On the other hand, we aimed to obtain new entropy inequalities. 
The edge-vertex inequality \eqref{eq:edge-vertex} and its \emph{blow-ups} 
(where both the vertex $v$ and the edge $uv$ are replaced 
with all the vertices in their respective $R$-radius neighborhoods) 
have a number of applications already. 
Theorem \ref{thm:universal_bound} is a generalization of \eqref{eq:edge-vertex}, 
and as such one expects it will provide further applications. 

\subsection*{Proof methods}
To prove Theorem \ref{thm:universal_bound} we will consider the $d$-regular tree $T_d$ 
as the Cayley graph of different groups $G$ depending on the parity of $d$ and $k$. 
When $k$ is even, we will use the free product $G = \IZ_2 \ast \cdots \ast \IZ_2$. 
When $k$ is odd, either $G = \IZ \ast \cdots \ast \IZ$ (for even $d$) or  
$G = \IZ \ast \cdots \ast \IZ \ast \IZ_2$ (for odd $d$) will be used. 
In each case we will try to find as many elements in $G$ as possible 
such that they freely generate a subgroup and 
each element has length $k$ (w.r.t.~the corresponding word metric in $G$). 
In other words, we will look for a maximum-rank free subgroup $H \leq G$ 
that has a generating set consisting of elements with length $k$. 
Once we have such a free subgroup $H$, Theorem \ref{thm:universal_bound} will 
follow from a more general version of the edge-vertex entropy inequality 
(Theorem \ref{thm:free_group_entropy_inequality}). 
This inequality is known from Lewis Bowen's work on free group actions, 
namely it is equivalent to the fact that the \emph{$f$-invariant} is non-negative 
for factors of the Bernoulli shift \cite{bowen}. 

Theorem \ref{thm:fixed_process} will be deduced from 
the correlation decay result of Backhausz, Szegedy, and Vir\'ag \cite{cordec}, 
which says that for a real-valued factor of i.i.d.\ process ($M=\IR$) 
the correlation of two vertices $u$ and $v$ at distance $k$ 
is (at most) of order $1 / (\sqrt{d-1})^k$. 
In the case of a finite state space $M$, 
by assigning a real number to each state we can replace 
our original process with a real-valued one. 
Consequently, for any assignment $M \to \IR$ the correlation bound 
tells us something about the joint distribution of $X_u$ and $X_v$ (for the original process). 
The idea is to try to find suitable 
assignments that yield a good bound on the mutual information of $X_u$ and $X_v$. 

\subsection*{Outline of the paper}
The rest of the paper is structured as follows.
In Section \ref{sec:2} we go through basic definitions 
and explain the more general entropy inequality we will need 
to prove the universal bound. 
The proofs of Theorem \ref{thm:universal_bound} and \ref{thm:fixed_process} 
are given in Section \ref{sec:3} and \ref{sec:4}, respectively.  
Finally, in Section \ref{sec:5} we present examples showing 
that the above theorems are (essentially) sharp. 

\subsection*{Acknowledgments}
We are grateful to \'Agnes Backhausz, Bal\'azs Szegedy, B\'alint Vir\'ag, 
and M\'at\'e Vizer for fruitful discussions on the topic. 
We would also like to thank the anonymous referee 
for many valuable comments and suggestions. 


\section{Preliminaries} \label{sec:2}

\subsection{Entropy and mutual information} \label{sec:entropy}
Let $X$ be a discrete random variable 
taking $m$ distinct values with probabilities $p_1, \ldots, p_m$. 
Then the \emph{Shannon entropy} of $X$ is defined as 
$$ H(X) \defeq \sum_{i=1}^m -p_i \log( p_i ) .$$
Given two discrete random variables $X$ and $Y$, 
$(X,Y)$ can be considered as a discrete random variable itself, 
and its entropy is denoted by $H(X,Y)$. 
(This is often called the \emph{joint entropy} of $X$ and $Y$.) 
One can define the \emph{mutual information} of $X$ and $Y$ by 
$$ I( X;Y ) \defeq H(X) + H(Y) - H(X,Y) .$$ 
Another way to define mutual information is via \emph{conditional entropies}: 
$$ I( X;Y ) = H(X) - H(X | Y) = H(Y) - H(Y | X) ,$$
where the conditional entropy $H(X|Y) = H(X,Y) - H(Y)$ can be expressed 
as the expectation (in $Y$) of the entropy of 
the (conditional) distribution of $X$ conditioned on $Y$, that is, 
\begin{equation*} 
H(X | Y) = \sum_{j=1}^n \P(Y = y_j) 
\sum_{i=1}^m -\P(X=x_i \, | \, Y=y_j) \log \P(X=x_i \, | \, Y=y_j) ,
\end{equation*}
where $x_1, \ldots, x_m$ and $y_1, \ldots, y_n$ 
denote the values taken by $X$ and $Y$, respectively. 
In other words, if $f_i$ denotes the mapping $y \mapsto \P( X=x_i | Y=y)$, 
then 
\begin{equation} \label{eq:conditional_entropy}
H(X | Y) = \E \sum_{i=1}^m -f_i(Y) \log f_i(Y) .
\end{equation}

\subsection{Factors of i.i.d.}

Although the results of this paper concern $\Aut(T_d)$-factors, 
we will need to use the notion of factors in a more general setting. 
Suppose that a group $\Ga$ acts on a countable set $S$.
Then $\Ga$ also acts on the space $M^S$ for a set $M$:
for any function $f \colon S \to M$ and for any $\ga \in \Ga$ let
\begin{equation} \label{eq:action}
(\ga \cdot f)(s) \defeq f( \ga^{-1} \cdot s) \quad \forall s \in S .
\end{equation}
First we define the notion of factor maps.
\begin{definition}
Let $M_1,M_2$ be measurable spaces and
$S_1,S_2$ countable sets with a group $\Ga$ acting on both.
A measurable mapping $F \colon M_1^{S_1} \to M_2^{S_2}$ is
said to be a \emph{$\Ga$-factor} if it is $\Ga$-equivariant,
that is, it commutes with the $\Ga$-actions.
\end{definition}
By an \emph{invariant (random) process} on $M^S$ we mean an $M^S$-valued random variable 
(or a collection of $M$-valued random variables) 
whose (joint) distribution is invariant under the $\Ga$-action. 
An important class of invariant processes is \emph{factor of i.i.d.\ processes} defined as follows.  
Suppose that $Z_s$, $s\in S_1$, are independent and identically distributed 
$M_1$-valued random variables. We say that 
$Z = \left( Z_s \right)_{s \in S_1}$ is an i.i.d.\ process on $M_1^{S_1}$. 
Given a $\Ga$-factor $F \colon M_1^{S_1} \to M_2^{S_2}$,
$X \defeq F(Z)$ is a factor of the i.i.d.\ process $Z$.
It can be regarded as a collection of $M_2$-valued random variables: 
$X = \left( X_s \right)_{s \in S_2}$. 

In fact, all this can be viewed in the context of ergodic theory. 
An invariant process in the above sense gives rise to a dynamical system over $\Ga$: 
the group $\Ga$ acts by measure-preserving transformations 
on the measurable space $M^S$ equipped with a probability measure 
(the distribution of the invariant process). 
An i.i.d.\ process simply corresponds to a (generalized) Bernoulli shift. 
Therefore factor of i.i.d.\ processes are essentially factors of Bernoulli shifts. 
Classical ergodic theory ($\IZ$-factors) 
have the largest literature and the most complete theory 
but $\Ga$-factors have also been thoroughly investigated for general $\Ga$. 

For amenable group actions (the Kolmogorov-Sinai) entropy 
serves as a complete invariant (for isomorphism of Bernoulli shifts). 
As for the nonamenable case, Ornstein and Weiss asked whether 
all Bernoulli shifts are isomorphic over a nonamenable group \cite{ornstein_weiss_1987}. 
This remained open until the breakthrough results of Lewis Bowen: 
he answered the question negatively by introducing 
the \emph{$f$-invariant} for free group actions \cite{f-invariant} 
and the \emph{$\Sigma$-entropy} for actions of sofic groups \cite{sofic_entropy}. 
In another paper he showed that the $f$-invariant is 
essentially a special case of the $\Sigma$-entropy 
which has the consequence that the $f$-invariant is non-negative 
for factors of the Bernoulli shift \cite[Corollary 1.8]{bowen}. 
We will need this fact in the form of an entropy inequality, 
see \eqref{eq:free_group_entropy_inequality} below. 

\subsection{Factors on $T_d$}
\label{sec:factors_Td}
The main results of this paper (Theorem \ref{thm:universal_bound} and \ref{thm:fixed_process}) 
are concerned with factor of i.i.d.\ processes on $T_d$. 
This corresponds to the case when $\Ga$ is the automorphism group $\Aut(T_d)$ of
the $d$-regular infinite tree $T_d$ and $S$ is the vertex set $V(T_d)$. 

When we say \emph{factor of i.i.d.\ process}, 
we should also specify which i.i.d.\ process we have in mind
(that is, specify $M_1$ and a probability distribution on it). 
By default we will work with the uniform $[0,1]$ measure 
(i.e., the Lebesgue measure on $[0,1]$). 
In fact, as far as the class of factor processes is concerned, 
it does not really matter which i.i.d.\ process we consider. 
For example, for $\{0,1\}$ with the uniform distribution 
we get the same class of factors as for the uniform $[0,1]$ measure. 
This follows from the fact that these two i.i.d.\ processes 
are $\Aut(T_d)$-factors of each other \cite{karen_ball}. 

Note that a factor of i.i.d.\ process $X$ on $T_d$ is $\Aut(T_d)$-invariant. 
Therefore each $X_v$ has the same distribution. 
Moreover, the joint distribution of $X_u$ and $X_v$ 
(and hence their correlation or mutual information) 
depends only on the distance between $u$ and $v$. 

One of our goals in this paper is to find a universal upper bound 
for the mutual information $I(X_u; X_v)$ 
that holds for any factor of i.i.d.\ process $X$. 
The next example, where a tuple of independent copies of 
the same factor of i.i.d.\ process is considered, 
shows that this goal is plausible only if we normalize $I(X_u; X_v)$ in some way. 
That is why we introduced the \emph{normalized mutual information} 
$I(X_u; X_v) / H(X_v)$. 
\begin{example} \label{ex:tuple}
Given a factor of i.i.d.\ process $X = F(Z)$ with a finite state space $M$ 
there exists a factor of i.i.d.\ process $Y= (Y^1, \ldots, Y^n)$ 
with state space $M^n = M \times \cdots \times M$ such that each 
$Y^i = \left( Y^i_v \right)_{v \in V(T_d)}$ is an independent copy of $X$. 
(The point is that one can take $n$ independent copies $Z^1, \ldots, Z^n$ of the i.i.d.\ process $Z$ 
and apply $F$ to each $Z^i$ to get $Y^i$. It is easy to see that 
$(Z^1, \ldots, Z^n)$ can be obtained as a factor of $Z$. 
Therefore the process $Y$ is also a factor of $Z$.) 
If we take $n$ copies of $X$ as described above, 
then each entropy and mutual information gets multiplied by $n$. 
On the other hand, the normalized mutual information 
(corresponding to two given vertices $u$ and $v$) is the same for $X$ and $Y$. 
\end{example}

\subsection{$F_r$-factors}
\label{sec:freegroup-factors}
The other case that will be of particular interest 
for us is when $\Ga$ is the free group $F_r$ of some rank $r$. 
We can set $S = \Ga = F_r$ and consider the natural action of $F_r$ on itself. 
Similarly as for $\Aut(T_d)$-factors, 
we use the uniform $[0,1]$ measure for the i.i.d.\ process. 
Using other measures would result in the same class of factor processes. 

This is actually a broader class than the class of $\Aut(T_d)$-factors (for $d=2r$). 
If $d=2r$, we can think of $T_d$ as the Cayley graph of $F_r$ 
with respect to a symmetric generating set $\{ a_1^{\pm 1}, \ldots, a_r^{\pm 1} \}$. 
That is, $V(T_d) = F_r$ and a vertex $g$ is incident to vertices of the form $g a_i^{\pm 1}$. 
Then $F_r$ acts on $V(T_d) = F_r$ (from the left) via automorphisms of this Cayley graph. 
So if we identify the elements of $F_r$ with these automorphisms, 
then $F_r$ becomes a subgroup of $\Aut(T_d)$, and consequently 
being $\Aut(T_d)$-equivariant is a stronger condition than being $F_r$-equivariant. 
In other words, every $\Aut(T_d)$-factor is an $F_r$-factor as well. 

For a general $F_r$-factor of i.i.d.\ we only have $F_r$-invariance 
(but not necessarily $\Aut(T_d)$-invariance). 
It is still true that each $X_g$ has the same distribution. 
As for the distribution of edges, however, 
$( X_g, X_{g a_i^{\pm 1}} )$ might have different 
distributions for different $a_i^{\pm 1}$. 

The following entropy inequality, which plays a central role 
in our proof of Theorem \ref{thm:universal_bound}, 
easily follows from the fact that the $f$-invariant 
of a factor of a Bernoulli shift is non-negative \cite{bowen}. 
\begin{theorem} \label{thm:free_group_entropy_inequality} 
Let $\Ga = \left\langle a_1, \ldots, a_r \right\rangle$ be a free group of rank $r \geq 2$.  
If $X = \left( X_g \right)_{g \in \Ga}$ is 
a $\Ga$-factor of the i.i.d.\ process on $[0,1]^{\Ga}$, 
then for a fixed $g \in \Ga$ we have 
\begin{equation}
\label{eq:free_group_entropy_inequality}
\frac{1}{r} \sum_{i=1}^r H(X_g, X_{g a_i}) \geq \frac{2r-1}{r} H(X_g) ,
\end{equation}
or equivalently:
\begin{equation}
\label{eq:free_group_mut_inf}
\frac{1}{r} \sum_{i=1}^r \frac{ I(X_g; X_{g a_i}) }{ H(X_g) } \leq \frac{1}{r}  .
\end{equation}
\end{theorem}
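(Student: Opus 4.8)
The plan is to derive the inequality \eqref{eq:free_group_entropy_inequality} directly from the non-negativity of Lewis Bowen's $f$-invariant. Recall that for a probability-measure-preserving action of the free group $\Ga = \langle a_1, \dots, a_r \rangle$ on a space, and a finite (or suitably nice) observable partition/process $\mathcal{P}$, the $f$-invariant is
\begin{equation*}
f(\mathcal{P}) = (1-2r)\, H(\mathcal{P}) + \sum_{i=1}^r H\!\left( \mathcal{P} \vee a_i \mathcal{P} \right),
\end{equation*}
with the $f$-invariant of the action itself obtained as $f = \inf_{\mathcal{P}} f(\mathcal{P})$ over generating $\mathcal{P}$ (or via the Rokhlin-type formula $f(\mathcal{P}) = \lim_n F(\mathcal{P}^{n})$). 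The key input from \cite{bowen} (Corollary 1.8) is that if the action is a factor of the Bernoulli shift over $\Ga$, then $f \geq 0$; and since the $f$-invariant of the full Bernoulli base over a finite alphabet $M$ with distribution $\mu$ is $\log|M|$ (or more precisely $f$ of the Bernoulli shift equals the base Shannon entropy), one in fact gets the cleaner statement that $f(\mathcal{P}) \ge 0$ when $\mathcal{P}$ is (a coarsening of) the canonical generating partition of a factor of i.i.d.

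First I would set up the translation between our language and Bowen's. Our process $X = (X_g)_{g \in \Ga}$, being a $\Ga$-factor of the i.i.d.\ process on $[0,1]^\Ga$, gives a p.m.p.\ action of $\Ga$ on the space of $X$-realizations, which is a factor of the Bernoulli shift. Let $\mathcal{P}$ be the partition according to the value $X_e$ at the identity. Then, because of $\Ga$-equivariance, the value at a vertex $g^{-1}$ is read off by the partition $g \mathcal{P}$, and $\mathcal{P} \vee a_i \mathcal{P}$ is exactly the partition recording the pair $(X_e, X_{a_i^{-1}})$; since by $\Ga$-invariance the joint distribution of $(X_e, X_{a_i^{-1}})$ equals that of $(X_g, X_{g a_i})$ for any $g$, we have $H(\mathcal{P} \vee a_i \mathcal{P}) = H(X_g, X_{g a_i})$ and $H(\mathcal{P}) = H(X_g)$. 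A subtlety: if the original state space $M$ is infinite ($M = \IR$, say), $\mathcal{P}$ is not finite, so one should first reduce to the finite case by composing $X$ with a finite measurable quantization $M \to \{1,\dots,N\}$; the inequality \eqref{eq:free_group_entropy_inequality} for all quantizations, combined with a limiting argument, yields it for the general case — but in fact for the application in this paper $M$ is already finite, so this step is cosmetic.

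Next, applying $f(\mathcal{P}) \ge 0$ gives
\begin{equation*}
\sum_{i=1}^r H(X_g, X_{g a_i}) \ge (2r-1)\, H(X_g),
\end{equation*}
which is precisely \eqref{eq:free_group_entropy_inequality} after dividing by $r$. For the equivalent reformulation \eqref{eq:free_group_mut_inf}, I would just expand $I(X_g; X_{g a_i}) = 2 H(X_g) - H(X_g, X_{g a_i})$ (using that $X_g$ and $X_{g a_i}$ are identically distributed by $\Ga$-invariance), sum over $i$, divide by $r H(X_g)$, and rearrange: $\frac{1}{r}\sum_i I(X_g;X_{ga_i})/H(X_g) = 2 - \frac{1}{r H(X_g)}\sum_i H(X_g,X_{ga_i}) \le 2 - \frac{2r-1}{r} = \frac{1}{r}$.

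The main obstacle is not a calculation but making the citation precise: one has to be careful about the exact normalization convention for the $f$-invariant (different sources shift it by the base entropy), about which partitions the non-negativity statement applies to, and — if one wants the inequality for non-Bernoulli i.i.d.\ bases or for $M = \IR$ — about the monotonicity/continuity of $f(\mathcal{P})$ under refinement and limits. I expect the cleanest route is to quote \cite[Corollary 1.8]{bowen} in the form "$f$ is non-negative for any factor of a Bernoulli shift over $F_r$" and then note that $f \le f(\mathcal{P})$ for the (generating, or merely sub-$\sigma$-algebra) partition $\mathcal{P}$ above, using monotonicity of $f(\cdot)$; the entropy bookkeeping described above then finishes it. I would relegate the infinite-alphabet reduction to a remark since the paper only needs finite $M$.
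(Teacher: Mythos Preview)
Your proposal is correct and matches the paper's approach exactly: the paper does not give a self-contained proof of this theorem but simply states that it ``easily follows from the fact that the $f$-invariant of a factor of a Bernoulli shift is non-negative \cite{bowen},'' which is precisely the derivation you outline. Your translation between the process language and Bowen's partition language, together with the observation that $f(\mathcal{P}) \ge f(\text{action}) \ge 0$ and the subsequent algebraic rearrangement, fills in the details the paper leaves implicit.
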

\begin{remark} 
This is more general than the edge-vertex entropy inequality 
\eqref{eq:edge-vertex} for $\Aut(T_d)$-factors for $d=2r$. 
Indeed, given an $\Aut(T_d)$-factor, it is also an $F_r$-factor, 
but with the extra property that the distributions of edges are the same. 
\end{remark}
%


\section{The universal bound} \label{sec:3}

In this section $G$ will denote the free product of 
$r$ copies of $\IZ$ and $t$ copies of $\IZ_2$ 
for different values of $r$ and $t$: 
$$ G = \underbrace{\IZ \ast \cdots \ast \IZ}_{r} \ast 
\underbrace{\IZ_2 \ast \cdots \ast \IZ_2}_{t} = 
\left\langle a_1, \ldots, a_r, a_{r+1}, \ldots, a_{r+t} \, | \, 
a_{r+1}^2 = \cdots = a_{r+t}^2 = e \right\rangle .$$
Let $A$ denote the set $\{ a_1^{\pm 1}, \ldots, a_r^{\pm 1}, a_{r+1}, \ldots, a_{r+t} \}$. 
First we define the \emph{word metric} on $G$ with respect to $A$. 
We will refer to the elements of $A$ as \emph{letters} 
and to products of these elements as \emph{words}. 
An element $g \in G$ can be represented by many words 
but for each $g$ there exists a unique shortest representing word. 
Actually, starting with any word representing $g$, 
by performing all possible cancellations in that product 
one always gets the shortest representing word 
that we will call the \emph{reduced form}. 
We define the \emph{length of $g$} as the length of this reduced form. 
(As for the \emph{unit element} $e$ of $G$, 
it is represented by the empty product, 
and hence the length of $e$ is $0$.) 

Note that the Cayley graph of $G$ with respect to $A$ is $T_d$ for $d=2r+t$. 
(That is, $V(T_d) = G$ and a vertex $g$ is incident to vertices of the form $g h$, $h \in A$.) 
The \emph{word metric} on $G$ (w.r.t.\ $A$) 
actually coincides with the graph distance on this Cayley graph. 

Our goal is to apply the inequality 
(\ref{eq:free_group_entropy_inequality}--\ref{eq:free_group_mut_inf}) 
for free subgroups of $G$. 
To obtain a result about vertices at distance $k$ in $T_d$ 
we will need a free subgroup $H$ that is generated by elements of length $k$. 
The higher the rank of our subgroup, the better inequality we get. 
Therefore we need to find as many elements of length $k$ as possible 
such that they freely generate a subgroup. (Although we will not need this fact, 
we mention that when we have the maximal possible number of elements, 
the generated subgroup has finite index.) 
\begin{lemma} \label{lem:subgroup1}
Let $d=2r$ and let 
$$ G = F_r = \underbrace{\IZ \ast \cdots \ast \IZ}_{r} = 
\left\langle a_1, \ldots, a_r \right\rangle .$$ 
Then for any odd integer $k = 2l+1$ there exists a free subgroup $H \leq G$ 
of rank $d(d-1)^l/2$ that is generated freely by elements of length $k$ 
(in the corresponding word metric). 
\end{lemma}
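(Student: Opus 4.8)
The plan is to exhibit $n := d(d-1)^l/2 = r(2r-1)^l$ explicit words of length $k = 2l+1$ and verify that they freely generate $H$, via a small-cancellation-type criterion. For reduced words $u,w$ in $F_r$ write $c(u,w)$ for the number of letters that cancel in the product $uw$, so that $|uw| = |u| + |w| - 2c(u,w)$.

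\emph{Step 1: the free-generation criterion.} I will use the classical fact that if reduced words $w_1, \dots, w_m$ satisfy $c(w_j, w_{j+1}) + c(w_{j+1}, w_{j+2}) < |w_{j+1}|$ for every $j$, then $w_1 \cdots w_m \neq e$ (and its reduced length equals $\sum_j |w_j| - 2\sum_j c(w_j, w_{j+1})$). Every nontrivial element of the subgroup generated by candidate elements $g_1, \dots, g_n$ is a freely reduced word $g_{i_1}^{\epsilon_1} \cdots g_{i_m}^{\epsilon_m}$ (no $g_i^{\epsilon} g_i^{-\epsilon}$ occurring consecutively), and since each $|g_{i_j}^{\epsilon_j}| = 2l+1$, it is enough to ensure $c(g_i^{\epsilon}, g_j^{\delta}) \le l$ whenever $(i,\epsilon) \neq (j,-\delta)$; then consecutive cancellations sum to at most $2l < 2l+1$. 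To make this checkable, write $g_i = p_i x_i q_i$ in reduced form with $|p_i| = |q_i| = l$ and $x_i$ a single letter. A short computation shows $c(u,w) \ge l+1$ exactly when the length-$(l+1)$ suffix of $u$ is the inverse of the length-$(l+1)$ prefix of $w$; combined with $(\mathrm{pref}_{l+1}(h))^{-1} = \mathrm{suff}_{l+1}(h^{-1})$, this gives $c(g_i^{\epsilon}, g_j^{\delta}) \ge l+1$ iff $\mathrm{suff}_{l+1}(g_i^{\epsilon}) = \mathrm{suff}_{l+1}(g_j^{-\delta})$. Hence the criterion holds as soon as the $2n$ words $\mathrm{suff}_{l+1}(g_i^{\sigma})$, $1 \le i \le n$, $\sigma \in \{+1,-1\}$, are pairwise distinct.

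\emph{Step 2: the construction.} Since there are exactly $2r(2r-1)^l = 2n$ reduced words of length $l+1$, what I want is a family $g_1, \dots, g_n$ of length-$k$ elements whose suffixes $\mathrm{suff}_{l+1}(g_i^{\pm 1})$ list all of them, i.e.\ partition the set of length-$(l+1)$ reduced words into $n$ pairs. I would produce this partition by grouping the $2r$ letters into the $r$ inverse-pairs $\{a_s, a_s^{-1}\}$: for each $s$ there are $(2r-1)^l$ reduced words of length $l+1$ beginning with $a_s$ and equally many beginning with $a_s^{-1}$, so fix a bijection $\pi_s$ between the two sets. A matched pair $(v, v')$ with $v' = \pi_s(v)$ has the form $v = a_s q$, $v' = a_s^{-1} q'$ with $q, q'$ reduced of length $l$; set
\[ g := (q')^{-1}\, a_s\, q . \]
One checks that $g$ is a reduced word of length $2l+1$ — neither junction cancels, because $v'$ reduced forces the last letter of $(q')^{-1}$ to differ from $a_s^{-1}$ and $v$ reduced forces the first letter of $q$ to differ from $a_s^{-1}$ — and that $\mathrm{suff}_{l+1}(g) = v$, $\mathrm{suff}_{l+1}(g^{-1}) = v'$. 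Running $(v,v')$ over all $n$ matched pairs yields $n$ elements of length $k$ whose suffixes realize the partition, so by Step 1 they freely generate a subgroup $H \le G$ of rank $n = d(d-1)^l/2$.

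\emph{Main obstacle.} The construction itself and the suffix/prefix bookkeeping are routine once the reduced forms are spelled out. The delicate point is Step 1: correctly invoking the iterated-cancellation lemma to see that a uniform bound of $l$ on the cancellation at each junction really prevents a long product from collapsing — the numerics $l + l = 2l < 2l+1$ is exactly what keeps the ``middle letters'' $x_{i_j}$ alive in the reduced form — and, separately, making sure the boundary case $l = 0$ (where $k=1$ and the recipe just returns $\{a_1, \dots, a_r\}$) is not overlooked.
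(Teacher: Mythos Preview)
Your proof is correct. Both your argument and the paper's hinge on the same mechanism: choosing $n$ length-$k$ words so that the $2n$ length-$(l{+}1)$ suffixes of $g_i^{\pm 1}$ are pairwise distinct, which caps the cancellation at each junction by $l$ and keeps the middle letters alive. The difference is only in the construction: the paper picks the canonical set of \emph{palindromes} $b_1\cdots b_l b_{l+1} b_l\cdots b_1$ (your recipe with the specific bijection $\pi_s(a_s q)=a_s^{-1}\bar q$, where $\bar q$ inverts each letter of $q$ in place), and then proves the cancellation bound directly by a short induction rather than invoking the iterated-cancellation lemma as a black box. Your version is more flexible (any bijection $\pi_s$ works), while the paper's is tidier and avoids the auxiliary lemma; the underlying idea is the same.
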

\begin{lemma} \label{lem:subgroup2}
Let $d=2r+1$ and let 
$$ G = F_r \ast \IZ_2 = \underbrace{\IZ \ast \cdots \ast \IZ}_{r} \ast \IZ_2 = 
\left\langle a_1, \ldots, a_r, a_{r+1} \, | \, a_{r+1}^2 = e \right\rangle .$$ 
Then for any odd integer $k = 2l+1$ with $l \geq 1$ there exists a free subgroup $H \leq G$ 
of rank $d(d-1)^l/2$ that is generated freely by elements of length $k$ 
(in the corresponding word metric). 
\end{lemma}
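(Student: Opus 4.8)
The plan is to prove Lemmas \ref{lem:subgroup1} and \ref{lem:subgroup2} together, since the second is a mild variation of the first. Throughout, I identify $G$ with the vertex set of $T_d$ via its Cayley graph, so the unit element $e$ is a fixed root, and ``length $k$'' means ``at graph distance $k$ from $e$.'' The target rank $d(d-1)^l/2$ is exactly half the number $d(d-1)^l$ of vertices of $T_d$ at distance $k = 2l+1$ from $e$, which is the key numerical hint: I want to select, among the $d(d-1)^l$ elements of length $k$, a subset of size $d(d-1)^l/2$ that freely generates a subgroup, and the natural candidate is to pick one element out of each pair $\{g, g^{-1}\}$ — note that since $k$ is odd, an element of length $k$ can never equal its own inverse in a free product of $\IZ$'s and $\IZ_2$'s (inverting reverses the reduced word and swaps $a_i \leftrightarrow a_i^{-1}$, which for odd length cannot return the same word), so these pairs are genuine and there are exactly $d(d-1)^l/2$ of them.

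First I would set up the combinatorial structure precisely. Every element $g$ of length $k = 2l+1$ has a unique reduced word $w_1 w_2 \cdots w_k$ with each $w_j \in A$ and no cancellation ($w_{j+1} \neq w_j^{-1}$, and for the $\IZ_2$-generators $w_{j+1} \neq w_j$). Reading the reduced word left to right traces a geodesic path from $e$ to $g$ in $T_d$. The $d(d-1)^l/2$ elements I select are the ``positive'' representatives of the inverse-pairs; concretely I would fix a rule, e.g. declare $g$ positive if the first letter $w_1$ lies in a designated half of the edges at $e$ (this works cleanly when $d$ is even and all generators have infinite order; when there is a $\IZ_2$-factor I need a slightly more careful tie-breaking rule, e.g. order $A$ linearly and take the word that is lexicographically smaller than its inverse's reduced word — this is where Lemma \ref{lem:subgroup2} needs the hypothesis $l \geq 1$, since for $k=1$ the single $\IZ_2$-generator $a_{r+1}$ is its own inverse and the pairing breaks down). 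Let $H$ be the subgroup generated by this set $S$ of size $d(d-1)^l/2$.

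Next comes the heart of the argument: showing $S$ freely generates $H$, i.e. that no nontrivial reduced word in the elements of $S^{\pm 1}$ equals $e$ in $G$. This is a ``ping-pong''-style claim. The clean way is via geodesics: I would show that if $s_1^{\eps_1} \cdots s_m^{\eps_m}$ is reduced as a word in $S^{\pm 1}$ (so $s_{j+1}^{\eps_{j+1}} \neq s_j^{-\eps_j}$), then when one writes out each $s_j^{\eps_j}$ as its length-$k$ reduced word in $A$ and concatenates, the total cancellation between the block for $s_j^{\eps_j}$ and the block for $s_{j+1}^{\eps_{j+1}}$ is \emph{strictly less than} $k$ — in fact I would aim for cancellation at most $l$ (or $k/2$ rounded appropriately), so that each factor contributes at least one ``surviving'' letter and the product has length at least $m$, in particular is nontrivial. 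To get this I would prove a structural fact: the reduced word of each $s \in S$ has the form $u \cdot c \cdot v$ where $u$ is a prefix of length $l$, $v$ a suffix of length $l$, $c$ a ``central'' letter, and — crucially — the set of prefixes that occur and the set of suffixes that occur are arranged so that a suffix of one generator can cancel at most the ``outer half'' of a neighbouring generator. Choosing the positive/negative selection rule so that it respects this prefix structure (e.g. the first letter $w_1$ determines positivity, so $s$ and $s^{-1}$ start with opposite-type letters) is what makes the ping-pong bookkeeping go through; this is the step I expect to be the main obstacle, because one must simultaneously (i) hit the exact maximal rank $d(d-1)^l/2$, (ii) have a selection rule that is $G$-intrinsic and well-defined on inverse-pairs, and (iii) make the geodesic/cancellation estimate close.

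Finally, once $S$ is shown to be a free generating set, each element of $S$ has length exactly $k$ by construction, so $H$ is a free subgroup of rank $d(d-1)^l/2$ generated freely by elements of length $k$, which is the assertion of both lemmas. For Lemma \ref{lem:subgroup2} the only additional point, beyond handling the $\IZ_2$-generator's square relation in the reduced-word calculus (reduced words simply never repeat the letter $a_{r+1}$), is the $l \geq 1$ caveat noted above; for $l \geq 1$ the central letter $c$ gives enough room that the argument is uniform in whether or not a $\IZ_2$-factor is present. I would remark that the claimed finite index of $H$ (mentioned parenthetically in the text) follows from a Euler-characteristic / Nielsen–Schreier count once freeness and the exact rank are established, but since it is not needed I would not include the verification.
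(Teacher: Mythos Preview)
There is a basic numerical error at the very start that derails the whole plan: the number of vertices of $T_d$ at distance $k=2l+1$ from $e$ is $d(d-1)^{k-1}=d(d-1)^{2l}$, not $d(d-1)^l$. So the target rank $d(d-1)^l/2$ is \emph{not} half of all length-$k$ elements, and your proposed generating set $S$ (one representative from every inverse pair of length-$k$ elements) has size $d(d-1)^{2l}/2$, far too large.

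Worse, such an $S$ cannot be free regardless of the selection rule. Already in $F_2=\langle a,b\rangle$ with $k=3$ take
\[
g_1=ab\,a^{-1},\qquad g_2=ab^{-1}a,\qquad g_3=a^{3}.
\]
All three begin with the positive letter $a$ and no two are inverse to each other, so any ``pick the lexicographically smaller / positive-first-letter'' rule puts all three in $S$. But $g_1g_2=a^{2}$, hence $(g_1g_2)^{3}=g_3^{2}$, a nontrivial relation. The step you flagged as ``the main obstacle'' --- bounding the cancellation between consecutive blocks by $l$ --- simply fails for generic length-$k$ elements; in the example above the blocks for $g_1$ and $g_2$ cancel down to a word of length $2<k$.

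What the paper does instead is restrict to the \emph{palindromes} of length $2l+1$: reduced words of the form $b_1\cdots b_l\,b_{l+1}\,b_l\cdots b_1$. There are exactly $d(d-1)^l$ of these (they are determined by their first $l+1$ letters), and one checks that for $l\ge 1$ none is its own inverse, so the inverse pairs give a set $S_0$ of size $d(d-1)^l/2$. The palindrome structure is precisely what forces the cancellation bound: if the last $l+1$ letters of $s_n$ agree with the last $l+1$ letters of the running product, then multiplying by $s_{n+1}\neq s_n^{-1}$ can cancel at most $l$ letters, because cancelling $l+1$ would force $s_{n+1}^{-1}$ to share its first $l+1$ letters with $s_n$, and for palindromes of length $2l+1$ that means $s_{n+1}^{-1}=s_n$. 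This is the missing structural idea; once you restrict to palindromes your outline (pair off inverses, bound cancellation by $l$, conclude freeness) goes through exactly as in the paper.
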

\begin{lemma} \label{lem:subgroup3}
Let $d \geq 3$ be arbitrary and let 
$$ G = \underbrace{\IZ_2 \ast \cdots \ast \IZ_2}_{d} = 
\left\langle a_1, \ldots, a_d \, | \, a_1^2 = \cdots = a_d^2 = e \right\rangle .$$ 
Then for any even integer $k = 2l$ there exists a free subgroup $H \leq G$ 
of rank $(d-1)^l$ that is generated freely by elements of length $k$ 
(in the corresponding word metric). 
\end{lemma}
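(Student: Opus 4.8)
The plan is to construct the free subgroup $H$ explicitly by a ``first-letter'' coding of reduced words in $G = \IZ_2 \ast \cdots \ast \IZ_2$. Observe that a nontrivial reduced word in $G$ is simply an alternating product $a_{i_1} a_{i_2} \cdots a_{i_m}$ with consecutive indices distinct (no exponents are needed since each generator is an involution), and its length is exactly $m$. The Cayley graph is $T_d$, and a vertex at distance $k=2l$ from $e$ corresponds to such a word of length $2l$. The natural candidates for free generators are the words that ``go out'' along a path of length $l$ and then ``come back'' along a \emph{different} path of length $l$, since products of such words exhibit no cancellation when we are careful about the middle letter. Concretely, I would index the generators by pairs of length-$l$ reduced words $(u,w)$ that start at $e$, end at a common vertex, and differ in their last letter; after a moment's bookkeeping this counts to $(d-1)^l$ many generators of the form $g_{u,w} = u w^{-1} = u \bar w$, each of length $2l$.

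The key steps, in order, are: (1) fix a root vertex $o = e$ and let $V_l$ be the set of vertices at distance exactly $l$ from $o$, so $|V_l| = d(d-1)^{l-1}$; for each $x \in V_l$ let $p_x$ denote the unique reduced word of length $l$ representing $x$. (2) For each $x \in V_l$ choose a distinguished neighbor-on-the-sphere structure: group the elements of $V_l$ according to their penultimate vertex, and within each group designate one element as ``special''; this yields, for each non-special $x$, a canonical partner $x^*$ in the same group, and one checks there are exactly $(d-1)^l$ non-special vertices. (3) Define $g_x := p_x \, p_{x^*}^{-1}$ for each non-special $x$; verify directly that the reduced form of $g_x$ has length $2l$ (the cancellation between $p_x$ and $p_{x^*}^{-1}$ stops before the last letters because $x$ and $x^*$ differ precisely there, by construction). (4) Show $H := \langle g_x \rangle$ is free of rank $(d-1)^l$ on this generating set by a ping-pong / normal-form argument: in any nontrivial reduced product $g_{x_1}^{\eps_1} \cdots g_{x_n}^{\eps_n}$ the ``outer halves'' $p_{x_1}$ and $p_{x_n}^{\pm 1}$ survive, so the product has length $\geq 2$ and in particular is nontrivial.

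The main obstacle I expect is step (2)–(3): choosing the partner assignment $x \mapsto x^*$ so that \emph{every} $g_x$ has length exactly $2l$ (not less, due to accidental cancellation beyond the first letter) \emph{and simultaneously} the collection $\{g_x\}$ freely generates — these two requirements pull in slightly different directions, and one must verify that cancellation in a product $g_x^{\pm 1} g_y^{\pm 1}$ is controlled, i.e.\ never eats past the midpoint on both sides at once. The cleanest way to handle this is to arrange that the first letter of each $p_x$ records enough information: if we insist the partners $x, x^*$ share all letters except the last, then $g_x = p_x \bar p_{x^*}$ is automatically reduced of length $2l$, and in a product $g_x^{\eps} g_y^{\de}$ the only possible cancellation is at the seam between $\bar p_{x^*}$ (or $p_x$, depending on $\eps$) and the next factor, which cancels at most $l$ letters and hence always leaves the outermost letter of $g_x$ untouched — giving the normal-form property. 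Once this combinatorial setup is pinned down, the rank count $(d-1)^l$ and the freeness both fall out, and the length-$k$ requirement is built in by design.
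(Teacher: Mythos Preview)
Your ping-pong/normal-form idea is sound, and the generators $g_x = p_x\,p_{x^*}^{-1}$ with $p_x$ and $p_{x^*}$ agreeing in the first $l-1$ letters really do have length $2l$ and freely generate a subgroup. The problem is the \emph{count} in step~(2). For $l\ge 2$ the penultimate vertices form the sphere $V_{l-1}$, which has $d(d-1)^{l-2}$ elements, and each such vertex has exactly $d-1$ children in $V_l$. Designating one special element per group therefore leaves
\[
d(d-1)^{l-2}\,(d-2)
\]
non-special vertices, and this is strictly smaller than $(d-1)^l$; the shortfall is exactly $(d-1)^{l-2}$. (For instance, with $d=3$ and $l=2$ your construction produces $3$ generators, not $4$.) So your construction gives a free subgroup with generators of the right length but of the wrong rank, and the lemma as stated does not follow.

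The paper closes this gap by a different construction that does not come from a simple ``partner on the same branch'' pairing. It fixes the outgoing half $b_1 b_2\cdots b_l$ with $b_1=a_1$ (so $(d-1)^{l-1}$ choices) and, instead of pairing with another leaf under the same penultimate vertex, it pairs with the \emph{shifted} word $\varphi_j(b_1)\varphi_j(b_2)\cdots\varphi_j(b_l)$ for $j\in\{1,\dots,d-1\}$, yielding the generator
\[
\varphi_j(b_l)\cdots\varphi_j(b_2)\,a_{j+1}\,a_1\,b_2\cdots b_l .
\]
This gives $(d-1)\cdot(d-1)^{l-1}=(d-1)^l$ words, and the middle pair $a_{j+1}a_1$ lies in the base set $B_0=\{a_i a_1:2\le i\le d\}$, which is what makes the freeness argument go through. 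The shift $\varphi_j$ is precisely the extra degree of freedom that your ``same penultimate vertex'' scheme lacks; without something like it, any construction that identifies a single special sibling per branch will fall short by the margin computed above.
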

Before we prove the above lemmas, let us show how 
Theorem \ref{thm:universal_bound} follows. 
We start with a technical lemma. 
\begin{lemma} \label{lem:technical}
Suppose that $H$ is a subgroup of a countable group $G$. 
Let us equip the spaces $[0,1]^H$ and $[0,1]^G$ 
with the product of uniform $[0,1]$ measures. 
Then there exists a $[0,1]^H \to [0,1]^G$ mapping that is 
measure-preserving and $H$-equivariant. 
\end{lemma}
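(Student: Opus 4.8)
The plan is to split $G$ into $H$-orbits, use this to identify $[0,1]^G$ with a countable power of $[0,1]^H$-type spaces carrying the shift action, and then construct the map one coordinate at a time, so that $H$-equivariance comes for free; the only non-formal ingredient is the classical fact that $[0,1]$ admits a measure-preserving map onto any countable power of itself. Concretely, I would first choose a transversal $\{g_j : j \in J\}$ for the right cosets of $H$ in $G$, so that $G = \bigsqcup_{j \in J} H g_j$ with $J$ countable. The map $H \times J \to G$, $(h,j) \mapsto h g_j$, is then a bijection, and since $H$ acts on $G$ by left multiplication (this being the action that induces the shift on $[0,1]^G$ via \eqref{eq:action}), under this bijection the action becomes $\gamma \cdot (h,j) = (\gamma h, j)$, i.e.\ it only affects the $H$-coordinate. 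Hence, as measure spaces carrying an $H$-action, $[0,1]^G \cong [0,1]^{H \times J} = \big( [0,1]^J \big)^{H}$, the latter carrying the left-shift action of $H$ on the outer exponent. It therefore suffices to construct a measure-preserving, $H$-equivariant map $[0,1]^H \to \big( [0,1]^J \big)^{H}$.

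Next I would invoke the standard fact that, since $J$ is countable, there is a measurable measure-preserving map $\phi \colon [0,1] \to [0,1]^J$ sending Lebesgue measure to the product of Lebesgue measures: concretely, expand the argument in base $2$ and redistribute its binary digits among the $J$ coordinates using a bijection $\IN \cong \IN \times J$. Then I would define
\[ \Phi \colon [0,1]^H \to \big( [0,1]^J \big)^{H}, \qquad \Phi(x)_h \defeq \phi\big( x_h \big) \quad (h \in H). \]
This $\Phi$ is $H$-equivariant because a coordinatewise map commutes with every permutation of coordinates, in particular with the shifts: $\Phi(\gamma \cdot x)_h = \phi(x_{\gamma^{-1} h}) = \big( \gamma \cdot \Phi(x) \big)_h$. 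It is measure-preserving because it is the product over $h \in H$ of the measure-preserving map $\phi$, and an infinite product of measure-preserving maps pushes the product measure to the product measure (checked on cylinder sets; and since $\phi$ is measure-preserving off the countable set of dyadic rationals, which is Lebesgue-null, any everywhere-defined choice of $\phi$ works). Composing $\Phi$ with the identification of the previous paragraph yields the map asserted in the lemma.

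I do not expect a genuine obstacle here; the point worth emphasizing is why the naive approach fails: one cannot simply take a measure isomorphism $[0,1]^H \cong [0,1]^G$ (which exists, both being standard nonatomic probability spaces), because such an isomorphism need not respect the $H$-actions — it is exactly the equivariance requirement that forces the orbit decomposition of the first step and the coordinatewise construction of the second. The residual technicalities — measurability of $\phi$ and hence of $\Phi$, the existence of the measure-preserving $\phi$, and the bookkeeping with the null set of dyadic rationals — are all routine and classical.
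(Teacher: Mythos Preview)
Your proof is correct and follows essentially the same approach as the paper: both use a transversal for the right $H$-cosets to identify $G$ with $H \times T$ (your $J$) $H$-equivariantly, and then apply coordinatewise over $H$ a measure-preserving map $[0,1] \to [0,1]^T$. The only cosmetic difference is that the paper factors this last map explicitly through $\{0,1\}^{\IN}$ (via binary expansion, a bijection $\IN \cong T \times \IN$, and then back to $[0,1]$), whereas you package these three steps into a single $\phi$; the underlying construction is identical.
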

\begin{proof}
Let us fix measure-preserving mappings $\varphi \colon [0,1] \to \{0,1\}^{\IN}$ 
and $\psi \colon \{0,1\}^{\IN} \to [0,1]$, where $\{0,1\}$ is equipped with 
the (discrete) uniform distribution. 

Let us also fix a set $T$ that contains exactly one element of each right $H$-coset, 
meaning that $(h,t) \mapsto ht$ defines a bijection $H \times T \to G$. 
Using the trivial $H$-action on $T$ and the natural (left) $H$-actions on $H$ and $G$ 
the above bijection will clearly be $H$-equivariant. 
This induces an $H$-equivariant mapping 
$\alpha \colon \{0,1\}^{H \times T} \to \{0,1\}^G$. 

Since $T$ is either finite or countably infinite, 
$T \times \IN$ has the same cardinality as $\IN$, 
so we can fix a bijection between these sets as well. 
This bijection yields a measure-preserving mapping 
$\beta \colon \{0,1\}^{\IN} \to \{0,1\}^{T \times \IN}$. 

Combining the above mappings we get the following: 
$$ [0,1]^H \xrightarrow{\varphi \times \varphi \times \cdots} 
\{0,1\}^{H \times \IN} \xrightarrow{\beta \times \beta \times \cdots} 
\{0,1\}^{H \times T \times \IN} \xrightarrow{\alpha \times \alpha \times \cdots}  
\{0,1\}^{G \times \IN} \xrightarrow{\psi \times \psi \times \cdots} [0,1]^{G} .$$
Each of the above mappings clearly preserves measure and commutes with the $H$-actions.
\end{proof} 
\begin{proof}[Proof of Theorem \ref{thm:universal_bound}]
For $k=1$ the statement of the theorem is equivalent to \eqref{eq:mut_info} 
so we may assume that $k \geq 2$. Depending on $d$ and $k$ we choose 
the group $G$ and the positive integer $r'$ as follows:
\begin{align*} 
&\mbox{if $k=2l+1 \geq 3$ is odd and $d=2r$ is even: } & 
&G=\underbrace{\IZ \ast \cdots \ast \IZ}_{r}, & r'&=d(d-1)^l/2; \\
&\mbox{if $k=2l+1 \geq 3$ is odd and $d=2r+1$ is odd: } & 
&G=\underbrace{\IZ \ast \cdots \ast \IZ}_{r} \ast \IZ_2, & r'&=d(d-1)^l/2; \\
&\mbox{if $k=2l$ is even and $d$ is arbitrary: } & 
&G=\underbrace{\IZ_2 \ast \cdots \ast \IZ_2}_{d}, & r'&=(d-1)^l. 
\end{align*}
Let $A \subset G$ still denote the generating set described at the beginning of this section. 
Recall that the Cayley graph of $G$ with respect to $A$ is $T_d$ 
so from this point on $V(T_d)$ is identified with $G$. 
According to Lemma \ref{lem:subgroup1}--\ref{lem:subgroup3} 
in each of the above cases $G$ has a free subgroup $H$ of rank $r'$ 
such that $H$ has a free generating set $S_0$ consisting of elements of length $k$ 
(in the word metric of $G$ with respect to $A$). 

Now let $X = \left( X_v \right)_{v \in G}$ be a factor of i.i.d.\ process 
over $V(T_d)=G$ with a finite state space $M$. 
This means that there exists an $\Aut(T_d)$-factor mapping $F \colon [0,1]^{G} \to M^{G}$ 
such that $X=F(Z)$ where $Z$ is the i.i.d.\ process on $[0,1]^{G}$. 
According to Lemma \ref{lem:technical} there exist an $H$-equivariant 
mapping $\varrho \colon [0,1]^H \to [0,1]^G$ such that $Z = \varrho(\tilde{Z})$ 
where $\tilde{Z}$ is an i.i.d.\ process on $[0,1]^H$. 

By $\pi_H$ we denote the projection $M^{G} \to M^{H}$. 
We have the following situation:
$$ [0,1]^H  \xrightarrow{\varrho}  [0,1]^{G} \xrightarrow{F} M^{G} \xrightarrow{\pi_H} M^H, $$
where all three mappings are $H$-equivariant, 
and hence their composition is an $H$-factor mapping. 
This means that if we consider $X$ over $H$, 
then we get an $H$-factor of i.i.d.\ process: 
$\left( X_h \right)_{h \in H} = \pi_H \circ F \circ \varrho ( \tilde{Z} )$. 
Therefore we can apply \eqref{eq:free_group_mut_inf} to $H$ 
and its free generating set $S_0$ of size $r'$. 
For any $h\in H$ and any $s \in S_0$, 
the vertices $h$ and $hs$ have distance $k$ (in the graph metric of $T_d$). 
Then, because of the $\Aut(T_d)$-invariance of $X$, 
the normalized mutual information $I(X_h; X_{hs}) / H(X_h)$ 
is the same for all $h$ and $s$. 
Therefore in our case the average on the left-hand side of \eqref{eq:free_group_mut_inf} 
is simply equal to $I(X_u; X_v) / H(X_v)$ for any $u,v \in V(T_d)$ with $\dist(u,v)=k$, 
while the right hand side is $1/r'$, and hence Theorem \ref{thm:universal_bound} follows. 
(The sharpness will be shown in Section \ref{sec:5}.) 
\end{proof}
It remains to prove Lemma \ref{lem:subgroup1}--\ref{lem:subgroup3}.
\begin{proof}[Proof of Lemma \ref{lem:subgroup1}]
The set of letters in this case is $A = \{ a_1^{\pm 1}, \ldots, a_r^{\pm 1} \}$. 
A word is called a \emph{palindrome} if it reads the same backward as forward. 
Let us consider the following set of words: 
$$ S \defeq \left\{ s \in G : 
\mbox{the reduced form of $s$ is a palindrome and has length $2l+1$} \right\} .$$
That is, elements of $S$ are in the form $b_1 \cdots b_l b_{l+1} b_l \cdots b_1$, 
where $b_i \in A$ 
and $b_{i+1} \neq b_i^{-1}$. 
The number of such elements is clearly $2r(2r-1)^l = d(d-1)^l$. 

The inverse of a palindrome is also a palindrome 
(and not the same palindrome because $G$ has no elements of order $2$). 
Therefore there exists $S_0 \subset S$ with $|S_0| = |S|/2= d(d-1)^l/2$ such that 
$S = S_0 \cup S_0^{-1}$ where $S_0^{-1} = \{ s^{-1} \, : \, s \in S_0 \}$.  
We will see that $S_0$ is a free generating set of a subgroup 
$H \leq G$ that has all the required properties. 

The key observation is the following. 
\begin{claim*}
Let $s_1, \ldots, s_n$ be palindromes in $S$ such that $s_{i+1} \neq s_i^{-1}$ for each $i$. 
Then the reduced form of the product $s_1 \cdots s_n$ has length at least $2l+n$ 
and its last $l+1$ letters are the same as those of $s_n$.
\end{claim*}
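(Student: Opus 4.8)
The plan is to prove the Claim by induction on $n$, tracking not just the length of the reduced form of $s_1 \cdots s_n$ but also precisely which letters appear at its end. The base case $n=1$ is immediate: a single $s_1 \in S$ has reduced form of length $2l+1 = 2l + 1$, and trivially its last $l+1$ letters agree with those of $s_1$. For the inductive step, suppose the reduced form of $w \defeq s_1 \cdots s_{n-1}$ has length $\geq 2l + (n-1)$ and ends with the last $l+1$ letters of $s_{n-1}$, say $s_{n-1} = b_1 \cdots b_l b_{l+1} b_l \cdots b_1$, so the reduced form of $w$ ends in $b_{l+1} b_l \cdots b_1$. We must analyze the cancellation in the product $w \cdot s_n$.

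The key point is to bound how much cancellation can occur at the junction between $w$ and $s_n = c_1 \cdots c_l c_{l+1} c_l \cdots c_1$. Write the reduced form of $w$ as $u\, b_{l+1} b_l \cdots b_1$. When we concatenate with $s_n$, cancellation proceeds from the inside: we need $b_1 = c_1^{-1}$, then $b_2 = c_2^{-1}$, and so on. First I would argue that the cancellation cannot consume all of the first half $c_1 \cdots c_l c_{l+1}$ of $s_n$, i.e. it stops by the $(l+1)$-st letter at the latest. Suppose for contradiction it went at least that far: then $b_1 = c_1^{-1}, \ldots, b_{l} = c_{l}^{-1}$ and $b_{l+1} = c_{l+1}^{-1}$. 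But then, reading $s_{n-1}$ from the front, $s_{n-1} = b_1 \cdots b_{l+1} b_l \cdots b_1 = c_1^{-1} \cdots c_{l+1}^{-1} c_l^{-1} \cdots c_1^{-1} = (c_1 \cdots c_l c_{l+1} c_l \cdots c_1)^{-1} = s_n^{-1}$, contradicting the hypothesis $s_n \neq s_{n-1}^{-1}$. (Here I use that the reduced form of an inverse is the formal reverse-and-invert of the reduced form, and uniqueness of reduced forms.) So the cancellation at the junction kills at most $l$ letters from each side.

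Hence after reduction, at least the final $l+1$ letters $c_{l+1} c_l \cdots c_1$ of $s_n$ survive intact — actually I should be slightly careful: the cancellation stops somewhere among the first $l+1$ letters of $s_n$; once it stops at position $j \leq l+1$, letters $c_j, c_{j+1}, \ldots$ of $s_n$ remain, which certainly includes the last $l+1$ letters $c_{l+1} c_l \cdots c_1$, and no further cancellation is possible (the word is now reduced there). This proves the "last $l+1$ letters" part of the claim for $n$. For the length bound: at most $l$ letters of $w$ and at most $l$ letters of $s_n$ are cancelled at the junction, so the reduced form of $w s_n$ has length at least $\bigl(2l + (n-1)\bigr) + (2l+1) - 2l = 2l + n$, as required. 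A small point to check is that no \emph{new} cancellation is created elsewhere after this junction reduction; but the surviving prefix of $w$ was already reduced and ends with some letter that did \emph{not} cancel, while the surviving suffix of $s_n$ starts with $c_j$ where $c_j \neq b_j^{-1}$ — wait, more carefully, $c_j \neq$ (the inverse of the last surviving letter of $w$), precisely because the cancellation stopped there — so the concatenation is genuinely reduced.

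The main obstacle I anticipate is the bookkeeping in the inductive step: one has to handle the boundary cases cleanly (cancellation stopping exactly at the middle letter $c_{l+1}$, or cancellation stopping immediately so that nothing cancels, or the surviving prefix of $w$ being empty — though the length bound $\geq 2l + (n-1) \geq 2l+1 > l$ rules that last one out). The conceptual heart, however, is the short contradiction argument showing that cancellation past the midpoint of $s_n$ would force $s_n = s_{n-1}^{-1}$; everything else is careful accounting with reduced words in a free product, using only the standard normal-form facts for $\IZ \ast \cdots \ast \IZ$ (here there are no torsion letters, so cancellation is the only reduction move, which simplifies matters). Once the Claim is established, freeness of $\langle S_0 \rangle$ follows because any nontrivial reduced word $s_1^{\pm 1} \cdots s_n^{\pm 1}$ in the $s_i$ has image of length $\geq 2l + n > 0$, hence is nontrivial in $G$, and the elements of $S_0$ have length $k = 2l+1$ by construction.
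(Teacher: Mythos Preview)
Your proof is correct and follows essentially the same approach as the paper's: induction on $n$, with the key observation that if more than $l$ letters cancelled at the junction then the palindrome structure would force $s_n = s_{n-1}^{-1}$. The paper's argument is more terse (it simply asserts that at most $l$ letters cancel and that the length therefore increases by at least $(l+1)-l=1$), whereas you spell out the contradiction and the reduced-word bookkeeping explicitly.
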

We prove the claim by induction. It is obvious for $n=1$. For $n \geq 1$ let us assume that 
the reduced form of the product $s_1 \cdots s_n$ ends with the same $l+1$ letters as $s_n$ 
and let $s_{n+1} \neq s_n^{-1}$. This means that when we multiply 
the reduced form of $s_1 \cdots s_n$ by $s_{n+1}$ at most $l$ letters will be cancelled out 
and the remaining (at least $l+1$) letters of $s_{n+1}$ will appear unchanged at the end of the product. 
It follows that the last $l+1$ letters of the reduced form of $s_1 \cdots s_n s_{n+1}$ 
will be the same as those of $s_{n+1}$. We also get that $s_1 \cdots s_n s_{n+1}$ 
is at least $(l+1)-l=1$ longer than $s_1 \cdots s_n$, which completes the induction. 

In particular, the product $s_1 \cdots s_n$ cannot be the unit element of $G$. 
Therefore $S_0$ freely generates some subgroup $H \leq G$, 
the rank of which is, obviously, $|S_0| = d(d-1)^l/2$, 
and this is what we wanted to prove. 

In fact, $H$ has finite index. (We do not need this property in this paper.) 
This follows from the following observation. 
Let $T \subset G$ denote the set of elements of length at most $l$. 
Then it is easy to see that every element of $G$ can be (uniquely) written 
in the form $s_1 \cdots s_n t$, where $t \in T$, $s_i \in S$ and $s_{i+1} \neq s_i^{-1}$.
\end{proof}
\begin{proof}[Proof of Lemma \ref{lem:subgroup2}]
Essentially the same proof works. 
Here the set of letters is $A = \{ a_1^{\pm 1}, \ldots, a_r^{\pm 1}, a_{r+1} \}$, 
and one of the letters ($a_{r+1}$) has order $2$ meaning $a^{-1}_{r+1} = a_{r+1}$. 
However, we can still define the set $S$ of palindromes of length $k=2l+1$ 
for which we have $|S| = (2r+1)(2r)^l = d(d-1)^l$. 
The same claim as in the previous proof remains true. 
The only difference is that in this case $G$ has elements of order $2$. 
So we need to check that $S$ contains no element of order $2$, 
which is clearly true unless $l = 0$. 
The rest of the proof is the same. 
\end{proof}
\begin{proof}[Proof of Lemma \ref{lem:subgroup3}]
In this lemma the set of letters $A = \{a_1, \ldots, a_d \}$ 
consists of elements of order $2$. 
It is an easy exercise that for $l=1$ the set 
$$ B_0 \defeq \left\{ a_i a_1 : 2 \leq i \leq d \right\} $$ 
is a free generating set (of size $d-1$) of the subgroup of $G$ 
consisting of all elements of even length. Note that 
$$ B_0^{-1} = \left\{ a_1 a_i : 2 \leq i \leq d \right\} .$$

For $l \geq 2$ we will need to nest the $d-1$ elements of $B_0$ 
in palindrome-like words of length $2l$. 
First we define the mappings $\varphi_j \colon A \to A$: 
for $j \in \{1,\ldots, d-1\}$ let $\varphi_j$ shift the indices by $j$, 
that is, $\varphi_j( a_i ) \defeq a_{i+j}$. 
(The addition in the index is meant modulo $d$.) 
We will consider words of the following form: 
for any given $j \in \{1,\ldots, d-1\}$ and any given sequence of 
letters $b_1, \ldots, b_l$ from $A$ such that $b_1 = a_1$ and $b_{i+1} \neq b_i$ 
take the word 
$$ \varphi_j(b_l) \cdots \varphi_j(b_2) 
\underbrace{\varphi_j(b_1)}_{a_{j+1}} 
\underbrace{b_1}_{a_1} b_2 \cdots b_l .$$
Note that these words have length $2l$ and 
for the two letters in the middle we have 
$\varphi_j(b_1) b_1 = a_{j+1} a_1 \in B_0$. 
We claim that the set $S_0$ of these $(d-1)^l$ words freely generates a subgroup. 

The following is straightforward by induction. 
\begin{claim*}
Let $s_1, \ldots, s_n$ be words in $S \defeq S_0 \cup S_0^{-1}$ 
such that $s_{i+1} \neq s_i^{-1}$ for each $i$. 
Then the product $s_1 \cdots s_n$ has the following property:
\begin{itemize}
\item if $s_n \in S_0^{-1}$, then the last $l$ letters 
in the reduced form of $s_1 \cdots s_n$ are the same as in $s_n$;
\item if $s_n \in S_0$, then the last $l+1$ letters 
in the reduced form of $s_1 \cdots s_n$ are the same as in $s_n$.
\end{itemize}
\end{claim*}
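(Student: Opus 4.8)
The plan is to prove the claim by induction on $n$, tracking the last few letters of the reduced form of $s_1 \cdots s_n$, much as in the proofs of Lemma \ref{lem:subgroup1} and \ref{lem:subgroup2}. The base case $n=1$ is immediate from the explicit form of the words in $S_0$ and $S_0^{-1}$: a word $s \in S_0$ has the shape $\varphi_j(b_l) \cdots \varphi_j(b_2) a_{j+1} a_1 b_2 \cdots b_l$, so its last $l+1$ letters are $a_1 b_2 \cdots b_l$; its inverse $s^{-1} = b_l \cdots b_2 a_1 a_{j+1} \varphi_j(b_2) \cdots \varphi_j(b_l)$ (using that each letter has order $2$) has last $l$ letters $\varphi_j(b_2)\cdots\varphi_j(b_l)$, matching the stated dichotomy.

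For the inductive step, suppose the claim holds for $s_1 \cdots s_n$ and let $s_{n+1} \neq s_n^{-1}$. The crucial point is to control how many letters cancel when we multiply the reduced form of $s_1 \cdots s_n$ (which ends with the last $l$ or $l+1$ letters of $s_n$, depending on the case) by $s_{n+1}$. I would break into the four cases according to whether $s_n, s_{n+1}$ lie in $S_0$ or $S_0^{-1}$. In each case one must check two things: first, that the cancellation does not eat through the middle letter-pair coming from $B_0$ of $s_{n+1}$ — equivalently, that at most $l$ letters (when $s_{n+1}\in S_0$) or at most $l-1$ letters (when $s_{n+1}\in S_0^{-1}$) are cancelled, so that the claimed suffix of $s_{n+1}$ survives intact; and second, that no additional cancellation occurs at the new junction inside the surviving part, which follows from the constraint $b_{i+1}\neq b_i$ built into the definition of $S_0$. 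The condition $s_{n+1}\neq s_n^{-1}$ is what prevents full cancellation of the first half of $s_{n+1}$ against the tail of $s_1\cdots s_n$; concretely, if $s_n\in S_0^{-1}$ and $s_{n+1}\in S_0$ and their halves matched letter-for-letter all the way to the middle, comparing the $B_0$-factors in the middle would force $s_{n+1}=s_n^{-1}$.

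The main obstacle I anticipate is the bookkeeping at the two middle letters of each word: unlike the genuine palindromes of Lemma \ref{lem:subgroup1}, here the words in $S_0$ are only "palindrome-like" — the two halves are related by the twist $\varphi_j$ rather than being literally mirror images — so the suffix lengths $l$ versus $l+1$ differ by case, and one has to verify that after cancellation the reduced form still ends with a full copy of the relevant suffix of $s_{n+1}$ and strictly grows in length (at least by $1$ per new factor). Once this letter-level analysis is in place for all four cases, the claim follows, and in particular $s_1\cdots s_n$ is never the identity, so $S_0$ freely generates a subgroup $H \leq G$ of rank $|S_0| = (d-1)^l$ all of whose generators have length $2l = k$, which is exactly what Lemma \ref{lem:subgroup3} asserts.
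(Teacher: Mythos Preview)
Your approach is exactly what the paper intends: it simply declares the claim ``straightforward by induction'' without spelling out the four cases, and your plan to do induction on $n$ with a case analysis on the membership of $s_n, s_{n+1}$ in $S_0$ versus $S_0^{-1}$ is the right way to fill this in. The key mechanism you identify --- that the condition $s_{n+1} \neq s_n^{-1}$ is precisely what stops the cancellation from reaching (or passing through) the $B_0$-pair in the middle --- is the heart of the argument.

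One small bookkeeping slip to fix: you have the cancellation bounds swapped. To preserve the last $l+1$ letters of $s_{n+1}$ when $s_{n+1}\in S_0$, you need at most $l-1$ letters to cancel (not $l$); to preserve the last $l$ letters when $s_{n+1}\in S_0^{-1}$, you need at most $l$ to cancel (and indeed exactly $l$ can cancel in the case $s_n\in S_0$, $s_{n+1}\in S_0^{-1}$ with matching $b_i$'s but different $j$'s). Relatedly, the suffix of a word $s\in S_0$ that you wrote as ``$a_1 b_2\cdots b_l$'' is only $l$ letters; the last $l+1$ letters are $a_{j+1}\, a_1\, b_2\cdots b_l$, and similarly the last $l$ letters of $s^{-1}$ are $a_{j+1}\,\varphi_j(b_2)\cdots\varphi_j(b_l)$. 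Once these counts are corrected, the four-case verification goes through cleanly: in every case the cancellation halts either because the letter $a_1$ (from $b_1$) meets $a_{j'+1}$ (from $\varphi_{j'}(b'_1)$) with $j'\ge 1$, or because full matching of the known suffix would force $j=j'$ and $b_i=b'_i$ for all $i$, hence $s_{n+1}=s_n^{-1}$.
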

It immediately follows that the length of the reduced form of 
the product $s_1 \cdots s_n$ cannot decrease 
when multiplied by a new element $s_{n+1} \neq s_n^{-1}$. 
In particular, for $n \geq 1$ the product $s_1 \cdots s_n$ 
cannot be equal to the unit element $e$. 
Therefore $S_0$ freely generates a subgroup of rank $|S_0| = (d-1)^l$. 
\end{proof}


\section{The rate of decay for a fixed process} \label{sec:4}

We will need three ingredients to prove Theorem \ref{thm:fixed_process}. 
The first one is a bound for the correlation 
of a pair of vertices for factor of i.i.d.\ processes on $\IR^{V(T_d)}$,  
which was proved by Backhausz, Szegedy, and Vir\'ag in \cite{cordec}: 
\begin{equation} \label{eq:corr_decay_for_vertices}
\left| \corr( X_u, X_v ) \right| \leq
\left( k+1 - \frac{2k}{d} \right) \left( \frac{1}{ \sqrt{d-1} } \right)^k
\mbox{, where } k = \dist(u,v) ,
\end{equation}
that is, the rate of the correlation decay is essentially $1 / (\sqrt{d-1})^k$.
(Here it is assumed that $\var X_v < \infty$.)

Now suppose we have a finite state space $M$ and a factor of i.i.d.\ process on $M^{V(T_d)}$. 
How can we make use of the above result in this case? 
Taking any function $f \colon M \to \IR$ we can replace each $X_v$ with $f(X_v)$ 
to get a factor of i.i.d.\ on $\IR^{V(T_d)}$ so that 
\eqref{eq:corr_decay_for_vertices} can be applied. 
The second ingredient is the next lemma from \cite{one_ended_tail} 
which tells us that the same bound holds 
if we take different real-valued functions of $X_u$ and $X_v$. 
\begin{lemma} \label{lem:2functions}
Let $(A, \mathcal{F} )$ be an arbitrary measurable space.
Suppose that the $(A, \mathcal{F} )$-valued random variables $X_1,X_2$ are exchangeable
(that is, $(X_1,X_2)$ and $(X_2,X_1)$ have the same joint distribution),
and that there exists a constant $\al \geq 0$ with the property
that for any measurable $f \colon A \to \IR$ we have
\begin{equation} \label{eq:1function}
\left| \corr\big( f(X_1), f(X_2) \big) \right| \leq \al
\mbox{ provided that $f(X_1)$ has finite variance.}
\end{equation}
Then for any measurable functions $f_1,f_2 \colon A \to \IR$
\begin{equation} \label{eq:2functions}
\left| \corr\big( f_1(X_1), f_2(X_2) \big) \right| \leq \al
\mbox{ provided that $f_1(X_1)$ and $f_2(X_2)$ have finite variances.}
\end{equation}
\end{lemma}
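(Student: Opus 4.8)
The plan is to recast hypothesis \eqref{eq:1function} as a bound on the operator norm of a natural self-adjoint operator, from which \eqref{eq:2functions} follows by Cauchy--Schwarz; the same estimate can alternatively be produced by hand from a polarization identity.

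First I would reduce to a convenient normalization. Write $\mu$ for the common law of $X_1$ and $X_2$ (common because exchangeability forces $X_1 \overset{d}{=} X_2$), and note that exchangeability also gives $\var h(X_1) = \var h(X_2)$ for every measurable $h$, and that the bilinear form $C(g_1,g_2) \defeq \cov\big(g_1(X_1), g_2(X_2)\big)$ is \emph{symmetric} (since $(X_1,X_2) \overset{d}{=} (X_2,X_1)$). Subtracting constants changes neither $\cov$ nor $\var$, and the statement is vacuous unless $\var f_i(X_i) > 0$, so I may assume $\E f_i(X_i) = 0$ and $\var f_1(X_1) = \var f_2(X_2) = 1$; then claim \eqref{eq:2functions} becomes $|C(f_1, f_2)| \le \al$.

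For the operator argument, let $H = L^2_0(\mu)$ be the space of mean-zero square-integrable functions and define $T \colon H \to H$ by $Tf(x) = \E\big[f(X_2) \mid X_1 = x\big]$. Conditional Jensen makes $T$ a contraction (in particular well-defined on $H$), it sends mean-zero functions to mean-zero functions, and exchangeability makes it self-adjoint: $\langle Tf, g\rangle_\mu = \E[f(X_2)g(X_1)] = \E[f(X_1)g(X_2)] = \langle f, Tg\rangle_\mu$. For a unit vector $f \in H$ one has $\corr\big(f(X_1),f(X_2)\big) = \cov\big(f(X_1),f(X_2)\big) = \langle Tf, f\rangle_\mu$, so \eqref{eq:1function} says precisely that $|\langle Tf, f\rangle_\mu| \le \al$ for every unit $f \in H$; since $T$ is bounded and self-adjoint this is exactly $\|T\| \le \al$. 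Then for arbitrary mean-zero $f_1, f_2$ one has $C(f_1, f_2) = \langle f_1, Tf_2\rangle_\mu$, and Cauchy--Schwarz together with $\|Tf_2\|_\mu \le \al\|f_2\|_\mu$ gives $|C(f_1,f_2)| \le \al\|f_1\|_\mu\|f_2\|_\mu = \al$. The elementary alternative avoids $T$ altogether: apply \eqref{eq:1function} to $g_\pm = f_1 \pm f_2$ (these have finite variance since $f_i(X_1) \in L^2$) to get $|C(g_\pm, g_\pm)| \le \al \var\big(g_\pm(X_1)\big)$ — trivially true when $\var g_\pm(X_1) = 0$ — and combine the polarization identity $C(g_+, g_+) - C(g_-, g_-) = 4\,C(f_1,f_2)$ with $\var g_+(X_1) + \var g_-(X_1) = 2\var f_1(X_1) + 2\var f_2(X_1) = 4$ to conclude $4|C(f_1,f_2)| \le 4\al$.

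This lemma has no deep obstacle; its content lies entirely in choosing the right framework, and the only points needing (routine) care are: that the centering/scaling reduction is legitimate and that the degenerate zero-variance cases, where $\corr$ is undefined, are covered by the trivially true covariance inequality; that self-adjointness of $T$ (equivalently symmetry of $C$) genuinely uses exchangeability of the \emph{pair} $(X_1,X_2)$, not merely that $X_1$ and $X_2$ have the same law; and that the functions $g_\pm$ to which \eqref{eq:1function} is applied do have finite variance.
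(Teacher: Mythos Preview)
Your proposal is correct. The ``elementary alternative'' via polarization that you present is precisely the paper's own argument: after normalizing to unit variance, apply \eqref{eq:1function} to $f_1 \pm f_2$, use exchangeability to get symmetry of the covariance form, and combine. The operator-theoretic framing you lead with is a genuinely different and more conceptual route: it identifies \eqref{eq:1function} as the numerical-radius bound $|\langle Tf,f\rangle| \le \al$ for the self-adjoint Markov operator $T$, which for self-adjoint operators coincides with the norm bound $\|T\| \le \al$, whence \eqref{eq:2functions} is just $|\langle f_1, Tf_2\rangle| \le \|T\|\,\|f_1\|\,\|f_2\|$. This viewpoint buys a cleaner explanation of \emph{why} exchangeability is exactly the needed hypothesis (it is what makes $T$ self-adjoint, and without self-adjointness numerical radius does not control norm), and it generalizes immediately to other settings; the paper's direct polarization computation buys brevity and avoids any functional-analytic machinery.
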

\begin{proof}
The detailed proof can be found in \cite[Lemma 3.2]{one_ended_tail}. 
We include a sketch here for the sake of completeness. 
After rescaling we might assume that $\var( f_1(X_1) ) = \var( f_2(X_2) ) = 1$. 
If we apply \eqref{eq:1function}
to the function $f= f_1+f_2$ and also to $f=f_1-f_2$, 
we reach \eqref{eq:2functions} after a short and simple calculation. 
Note that the exchangeability of $X_1$ and $X_2$ implies 
$ \cov( f_1(X_1), f_2(X_2) ) = \cov( f_1(X_2), f_2(X_1) )$.  
\end{proof}
The final ingredient is the following lemma 
linking correlation to mutual information. 
\begin{lemma} \label{lem:corr_and_mut_info}
Let $X,Y$ be discrete random variables. 
Suppose that there exists a real number $\al \geq 0$ such that 
for any (real-valued) functions $f(X)$ and $g(Y)$ of $X$ and $Y$ it holds that 
$ \left| \corr\big( f(X), g(Y) \big) \right| \leq \al$. 
Then we have 
$$ I(X;Y) = H(X) - H(X | Y) \leq (m-1) \al^2 ,$$
where $m$ denotes the number of values $X$ can take. 
\end{lemma}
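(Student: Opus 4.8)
The plan is to bound the conditional entropy $H(X|Y)$ from below, equivalently bound $I(X;Y) = H(X) - H(X|Y)$ from above, by controlling how far the conditional distributions $\P(X = x_i \mid Y = y)$ can deviate from the unconditional distribution $p_i = \P(X = x_i)$. The correlation hypothesis must be converted into a variance statement about the functions $f_i(Y) \defeq \P(X = x_i \mid Y)$, which are exactly the functions appearing in formula \eqref{eq:conditional_entropy}.

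First I would set $f_i(Y) = \P(X = x_i \mid Y)$ for $i = 1, \ldots, m$, so $\E f_i(Y) = p_i$ and $\sum_i f_i(Y) = 1$. The key point is that $f_i(Y)$ can be realized (up to an additive constant and scaling) as $g(Y)$ for the indicator-type function $g$ built from $\ind_{\{X = x_i\}}$: indeed $\corr\big( \ind_{\{X = x_i\}}, f_i(Y) \big)$ is, after expanding the covariance using $\E\big[ \ind_{\{X=x_i\}} \mid Y \big] = f_i(Y)$, essentially $\var f_i(Y) / \sqrt{ p_i(1-p_i) \var f_i(Y) }$. Applying the hypothesis with $f = \ind_{\{X=x_i\}}$ and $g = f_i$ therefore yields $\var f_i(Y) \leq \al^2 \, p_i (1 - p_i) \leq \al^2 \, p_i$. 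So each $f_i(Y)$ concentrates near its mean $p_i$ with variance at most $\al^2 p_i$.

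Next I would estimate $H(X) - H(X|Y)$ using these variance bounds. Write $H(X) - H(X|Y) = \E \sum_i \big( -p_i \log p_i + f_i(Y) \log f_i(Y) \big) = \E \sum_i \phi(f_i(Y)) - \phi(p_i)$ where $\phi(x) = x \log x$. Since $\E f_i(Y) = p_i$, a second-order Taylor bound (using convexity of $\phi$, with $\phi''(x) = 1/x$) gives a bound on $\E[\phi(f_i(Y))] - \phi(p_i)$ in terms of the second moment of the deviation $f_i(Y) - p_i$; the cleanest route is to use the elementary inequality $\phi(b) - \phi(a) \leq \phi'(a)(b - a) + \frac{(b-a)^2}{2a}$ valid for $a > 0$, $b \geq 0$ (equivalently, bound $\log$ via $\log(1+x) \leq x$ applied appropriately), which after taking expectations and noting the linear term vanishes yields $\E[\phi(f_i(Y))] - \phi(p_i) \leq \frac{\var f_i(Y)}{2 p_i} \leq \frac{\al^2}{2}$. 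Summing over the $m$ values would give $I(X;Y) \leq \frac{m}{2}\al^2$, which is slightly worse than the claimed $(m-1)\al^2$; to recover the stated constant I would exploit the constraint $\sum_i f_i(Y) = 1$, i.e.\ $\sum_i (f_i(Y) - p_i) = 0$, which removes one degree of freedom, and sharpen the elementary inequality (the standard bound here is $\sum_i (\phi(b_i) - \phi(a_i)) \leq \sum_i \frac{(b_i - a_i)^2}{a_i}$ once the linear terms are dropped using $\sum b_i = \sum a_i = 1$, giving $I(X;Y) \leq \sum_i \frac{\var f_i(Y)}{p_i} \le m\al^2$, so the right calibration of the pointwise inequality plus the linear constraint is what produces $(m-1)$).

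The main obstacle I anticipate is precisely this last point: getting the constant $m - 1$ rather than $m$ or $\frac{m}{2}$. This requires choosing the right elementary inequality for $\phi(b) - \phi(a)$ and then carefully using $\sum_i f_i(Y) \equiv 1$ to eliminate the contribution corresponding to one coordinate — morally, the fluctuations of $(f_1, \ldots, f_m)$ live in the $(m-1)$-dimensional simplex tangent space, so only $m-1$ "independent" variance contributions should appear. A clean way to see this: apply the pointwise bound only to the first $m-1$ coordinates and handle $f_m$ via $f_m(Y) - p_m = -\sum_{i<m}(f_i(Y) - p_i)$, or alternatively bound the KL divergence $D\big(\P(X \mid Y=y) \,\|\, \P(X)\big)$ directly and integrate; either way the bookkeeping of the linear constraint against the quadratic remainder is where the care is needed. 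Everything else — the covariance expansion giving $\var f_i(Y) \leq \al^2 p_i$, and the Taylor estimate — is routine.
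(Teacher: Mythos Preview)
Your approach is essentially the paper's: set $g_i(Y)=\P(X=x_i\mid Y)-p_i$, use the correlation hypothesis with the pair $\big(\ind_{\{X=x_i\}},g_i(Y)\big)$ to get $\E g_i(Y)^2\le\al^2 p_i(1-p_i)$, then bound $I(X;Y)$ via $\log(1+x)\le x$. Two small corrections will finish it cleanly.

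First, the inequality $\phi(b)-\phi(a)\le\phi'(a)(b-a)+\frac{(b-a)^2}{2a}$ with $\phi(x)=x\log x$ is \emph{false} in general (take $a=1/2$, $b\to 0^+$); the Taylor remainder $\phi''(\xi)=1/\xi$ can exceed $1/a$ when $b<a$. What $\log(1+x)\le x$ actually yields is
\[
(p_i+g_i)\log(p_i+g_i)\le p_i\log p_i + g_i(1+\log p_i) + \frac{g_i^2}{p_i},
\]
i.e.\ the quadratic term is $\dfrac{(b-a)^2}{a}$, with no factor $1/2$. This is exactly the ``standard bound'' you wrote later.

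Second, the obstacle you anticipate about $m-1$ versus $m$ is illusory: you already established $\E g_i(Y)^2\le\al^2 p_i(1-p_i)$, and you should not weaken this to $\al^2 p_i$. Summing directly gives
\[
I(X;Y)\le\sum_{i=1}^m\frac{\E g_i(Y)^2}{p_i}\le\al^2\sum_{i=1}^m(1-p_i)=(m-1)\al^2.
\]
No extra use of the constraint $\sum_i g_i(Y)=0$ is needed; the $(1-p_i)$ factor you derived does all the work.
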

\begin{proof}
Let $A$ be an event that depends on $X$, that is, $\ind_A = f(X)$ for some function $f$. 
We denote the probability $\P(A)$ by $p$ and we set  
$$ g_A(y) \defeq \P( A | Y=y ) - \P(A) = \P( A | Y=y ) - p .$$
Clearly, $\E g_A(Y)=0$, and it is also easy to see that 
$$ \corr\big( f(X), g_A(Y) \big) = \frac{ \sqrt{\E g_A(Y)^2} }{\sqrt{p(1-p)}} .$$ 
It follows that 
\begin{equation} \label{eq:var}
\E g_A(Y)^2 \leq \al^2 p(1-p) .
\end{equation}
Now let us assume that $X$ takes the value $x_i$ with probability $p_i$ for $1 \leq i \leq m$. 
We will need to use the above inequality for each event $A_i = \ind_{\{X=x_i\}}$, $1\leq i \leq m$. 
We write $g_i$ for the corresponding function $g_{A_i}$. 

According to \eqref{eq:conditional_entropy} 
the conditional entropy $H(X | Y)$ can be expressed as 
$$ - H(X | Y) = \E \sum_{i=1}^m (p_i+g_i(Y)) 
\underbrace{\log(p_i+g_i(Y))}_{\log(p_i)+ \log \left( 1 + \frac{g_i(Y)}{p_i} \right) } .$$ 
Now by using the inequality $\log(1+x) \leq x$ we get that 
$$ -H(X | Y) \leq \underbrace{\sum_{i=1}^m p_i \log(p_i)}_{-H(X)} + 
\sum_{i=1}^m \E g_i(Y) \log(p_i) + 
\sum_{i=1}^m \E \left( \Big( p_i+g_i(Y) \Big) \frac{g_i(Y)}{p_i} \right).$$
Using that $\E g_i(Y)=0$ we conclude that 
$$ I(X;Y) = H(X) - H(X | Y) \leq \sum_{i=1}^m \E \frac{g_i(Y)^2}{p_i} \leq 
\al^2 \sum_{i=1}^m (1-p_i) = (m-1) \al^2 ,$$
where the last inequality follows from \eqref{eq:var}. 
\end{proof}
\begin{remark}
Although we will not need it in this generality, 
we mention that the lemma is true even when 
only one of the two random variables is assumed to be discrete. 
Let $X$ be discrete and $Y$ arbitrary, 
and suppose that $ \left| \corr\big( f(X), g(Y) \big) \right| \leq \al$ 
for any $f$ and any measurable $g$. Then it still follows that 
$H(X) - H(X | Y) \leq (m-1) \al^2$. 

The point is that one can use \eqref{eq:conditional_entropy} 
to define the conditional entropy $H(X|Y)$ even when $Y$ is not discrete: 
for an event $A$ the mapping $y \mapsto \P( A | Y=y )$ 
needs to be replaced by the conditional expectation $\E( \ind_A | Y )$, 
which is a measurable function of $Y$. 
The same modification needs to be made in the above proof. 
\end{remark}

Now we have all the ingredients to prove Theorem \ref{thm:fixed_process}.
\begin{proof}[Proof of Theorem \ref{thm:fixed_process}]
Let $M$ be finite and let $X_v$, $v \in V(T_d)$, be a factor of i.i.d.\ process on $M^{V(T_d)}$. 
Suppose that the distance of the vertices $u$ and $v$ is $k$ and set 
$$ \alpha = \frac{k+1}{\left( \sqrt{d-1} \right)^k } .$$ 
Then by \eqref{eq:corr_decay_for_vertices} we know that 
$\left| \corr( f(X_u) , f(X_v) ) \right| \leq \alpha$ for any function $f \colon M \to \IR$. 
There is an automorphism of $T_d$ taking $u$ to $v$ and $v$ to $u$, 
which means that the random variables $X_u$, $X_v$ are exchangeable. 
Therefore we can apply Lemma \ref{lem:2functions} to $X_u$ and $X_v$ 
and we obtain that $\left| \corr( f(X_u) , g(X_v) ) \right| \leq \alpha$ 
for any functions $f,g \colon M \to \IR$. By Lemma \ref{lem:corr_and_mut_info} 
it follows that 
$$ I( X_u; X_v ) < |M| \alpha^2 = \frac{|M| (k+1)^2}{(d-1)^k} ,$$
and this is exactly what we wanted to prove.
\end{proof}
%


\section{Examples} \label{sec:5}

In this section we construct factor of i.i.d.\ processes 
showing that our bounds are (essentially) sharp.

\subsection{Sharpness of Theorem \ref{thm:universal_bound}}

Let $k$ be a fixed positive integer and $u,v \in V(T_d)$ vertices at distance $k$. 
We claim that there exist factor of i.i.d.\ processes $X$ on $T_d$ such that 
the normalized mutual information $I(X_u;X_v)/H(X_v)$ 
can be arbitrarily close to the upper bound 
\begin{equation} \label{eq:bound}
\beta_k \defeq 
\begin{cases}
\frac{2}{d(d-1)^l} & \mbox{ if $k=2l+1$ is odd,} \\
\frac{1}{(d-1)^l} & \mbox{ if $k=2l$ is even.}
\end{cases}
\end{equation}
The idea is the following: given i.i.d.\ labels at each vertex, 
let the factor process list all the labels 
within some large distance $R$ at any given vertex. 
When we look at the joint distribution of $X_u$ and $X_v$ 
we get a collection of i.i.d.\ labels with some labels listed twice. 
Hence the normalized mutual information is $| B_R(u) \cap B_R(v) | / | B_R(v) |$, 
where $B_R(v)$ denotes the ball of radius $R$ around $v$. 
It is easy to see that this converges to $\beta_k$ as $R \to \infty$. 

For a rigorous argument we need to be more careful 
since listing the labels should be done in an $\Aut(T_d)$-invariant way. 
We first introduce two auxiliary lemmas and then precisely define our example.
\begin{lemma} \label{lm:max_sparse_set}
For any positive integer $L$ there exists a factor of i.i.d.\ $0$-$1$ labeling 
of the vertices of $T_d$ such that any ball of radius $L$ contains a vertex with label $1$ 
but any two vertices of label $1$ have distance greater than $L$. 
\end{lemma}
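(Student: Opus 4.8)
The plan is to build the desired labeling as a factor of i.i.d.\ by a standard ``greedy maximal packing'' construction, carried out $\Aut(T_d)$-equivariantly by breaking ties with the i.i.d.\ randomness. First I would attach to each vertex $w$ an independent random variable $U_w$ uniform on $[0,1]$ (these come from the i.i.d.\ input). Call a vertex $w$ a \emph{local maximum at scale $L$} if $U_w > U_{w'}$ for every $w' \neq w$ with $\dist(w,w') \le L$; declare $w$ to have label $1$ in this first round if it is such a local maximum. This already gives an $\Aut(T_d)$-equivariant $0$-$1$ labeling in which any two $1$-labeled vertices are at distance $> L$ (if two were within $L$, the one with smaller $U$-value could not be a local maximum). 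However, this alone need not be \emph{maximal}: a ball of radius $L$ may contain no local maximum at all.

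To fix maximality I would iterate. Having fixed the set $S_1$ of round-$1$ ones, consider the vertices not yet ``covered'', i.e.\ those at distance $> L$ from $S_1$; among these, repeat the local-maximum selection using the same $U$-values (now only comparing within the uncovered set) to obtain $S_2$, and so on, setting $S = \bigcup_{n \ge 1} S_n$. At every stage the distance-$>L$ property between chosen vertices is preserved, since a vertex added in round $n$ is far from all of $S_1 \cup \cdots \cup S_{n-1}$ by construction and far from the other round-$n$ vertices by the local-maximum argument. The construction is manifestly $\Aut(T_d)$-equivariant because it refers only to the graph structure and the i.i.d.\ labels. Finally, each step is a measurable function of finitely many $U$-values (the selection rule at vertex $w$ in round $n$ depends only on the labels in $B_{nL}(w)$, say), so $w \mapsto \ind_{\{w \in S\}}$ is a genuine factor of i.i.d.

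The remaining point is that the resulting $S$ is maximal: every ball of radius $L$ contains a point of $S$. Suppose not; then there is a vertex $x$ with $B_L(x) \cap S = \emptyset$, so $x$ is never covered and in particular is eligible in every round. I would argue that the vertex $y$ in $B_L(x)$ maximizing $U_y$ among all vertices that remain forever uncovered must itself get selected in some round: once every vertex of $B_L(y) \setminus \{y\}$ that is going to be covered has been covered, $y$ becomes a local maximum among the uncovered set and is chosen, contradicting $B_L(x) \cap S = \emptyset$. The main obstacle is making this last ``eventually $y$ is selected'' step fully rigorous: one must ensure the iteration actually terminates in its effect on any fixed bounded region (i.e.\ that the status of vertices in $B_L(x)$ stabilizes after finitely many rounds), so that ``forever uncovered'' is a well-defined notion and the local-maximum argument applies. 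This follows because each round that affects $B_L(x)$ must add a point of $S$ within distance $2L$ of $x$, and those points are pairwise at distance $> L$, so only finitely many rounds can touch $B_L(x)$; once they are exhausted, the above comparison argument forces $y \in S$.
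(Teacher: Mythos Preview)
Your iterative local-maximum construction is a valid alternative to the paper's proof, which instead refreshes the randomness each round: at every odd step each undefined vertex independently proposes label $1$ with probability $1/2$ and keeps it only if no other vertex within distance $L$ also proposed; at even steps vertices within distance $L$ of a fixed $1$ set themselves to $0$. Independence across rounds then gives an immediate exponential bound on the probability that a given vertex remains undefined, so termination is trivial.

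Your maximality argument, however, has a real gap. You take $y \in B_L(x)$ maximizing $U$ among the forever-uncovered vertices \emph{of $B_L(x)$}, and then assert that once $B_L(y)$ has stabilized, $y$ is a local maximum among the uncovered set in $B_L(y)$. But $B_L(y) \neq B_L(x)$: nothing you have said rules out a forever-uncovered $z \in B_L(y) \setminus B_L(x)$ with $U_z > U_y$, and such a $z$ would block $y$ from being selected in every round. Your stabilization argument is correct and, applied at each vertex, shows that the set $F$ of forever-uncovered vertices can have no local maximum of $U$ (within distance $L$); but that is not yet a contradiction located at $x$. To close the gap you need one further step: the absence of local maxima in $F$ forces, from any $w \in F$, an infinite strictly $U$-increasing path with steps of length at most $L$, and a union bound over length-$n$ paths shows this has probability at most $|B_L|^n/(n{+}1)! \to 0$, whence $F = \emptyset$ almost surely. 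The paper's fresh-randomness scheme sidesteps this issue entirely.
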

\begin{lemma} \label{lm:max_sparse_coloring}
For any positive integer $L$ there exists a factor of i.i.d.\ coloring  
of the vertices of $T_d$ such that finitely many colors are used 
and vertices of the same color have distance greater than $L$. 
\end{lemma}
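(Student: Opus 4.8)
The plan is to prove Lemma \ref{lm:max_sparse_coloring} by bootstrapping it from Lemma \ref{lm:max_sparse_set}. The key observation is that a factor of i.i.d.\ labeling as in Lemma \ref{lm:max_sparse_set} with parameter $L$ gives a sparse set $W \subset V(T_d)$ of ``label-$1$'' vertices that is $L$-separated (pairwise distances $> L$) but also $L$-dense (every ball of radius $L$ meets $W$). I want to turn such a $W$ into a proper coloring with finitely many colors where same-colored vertices are more than $L$ apart. First I would run the construction of Lemma \ref{lm:max_sparse_set} with the larger parameter $L' = 2L$ (or a comparable value) to obtain a maximal $L'$-separated set $W$; every vertex of $T_d$ is then within distance $L'$ of a \emph{unique} nearest element of $W$ (uniqueness can be arranged, or handled by a tie-breaking rule using the underlying i.i.d.\ labels), which partitions $V(T_d)$ into clusters $C_w$, $w \in W$, each of diameter at most $2L'$. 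Since $W$ is $L'$-separated, distinct clusters $C_w, C_{w'}$ whose ``centers'' $w,w'$ are far apart are themselves far apart; the only vertices that need to be colored carefully are those in nearby clusters.

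The second step is to color \emph{within and between nearby clusters}. Each cluster has bounded size $N$ (at most the size of a ball of radius $L'$ in $T_d$, which is finite since $T_d$ is locally finite), so there are only boundedly many clusters within distance $L$ of any fixed vertex. The idea is to assign to the center $w$ of each cluster a color from a fixed finite palette in such a way that any two cluster centers at distance $\le 3L'$ in $T_d$ receive distinct ``center colors'' --- this is a proper coloring of the graph on $W$ in which we join two centers if they are within distance $3L'$, a graph of bounded degree, so a bounded number of colors suffices; but one must do this as a factor of i.i.d. Here I would appeal again to the machinery available: a distance-$R$ proper coloring of $T_d$ itself with finitely many colors (which is an easy factor of i.i.d.\ construction, e.g.\ by iterating Lemma \ref{lm:max_sparse_set} or by a direct greedy argument on the i.i.d.\ labels) restricted to $W$ does the job. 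Then a vertex in cluster $C_w$ gets the composite color $(\text{center color of }w,\ \text{local address of the vertex within }C_w)$, where the local address is just the isomorphism type of the path from $w$ to the vertex inside the (rooted, finite) cluster --- there are at most $N$ local addresses. This uses a finite palette of size at most (number of center colors)$\times N$.

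The third step is to verify the separation property. Suppose $x \in C_w$ and $y \in C_{w'}$ have the same composite color. If $w = w'$, they have the same local address, hence $x = y$. If $w \ne w'$, then $w$ and $w'$ have the same center color, so by construction $\dist(w,w') > 3L'$; since $\dist(x,w) \le 2L'$ and $\dist(y,w') \le 2L'$ (crude bounds on cluster diameter), the triangle inequality gives $\dist(x,y) \ge \dist(w,w') - 4L' > 3L' - 4L'$, which is not yet positive --- so the bookkeeping needs care: I would instead take $L'$ large relative to $L$ (say clusters of diameter $\le L'$ by defining $C_w$ as the vertices within distance $L'/2$ of their nearest center, and using separation parameter, say, $10L$) and require center colors to differ whenever centers are within distance $100L$, so that $\dist(x,y) > L$ follows comfortably. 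The main obstacle is precisely this quantitative coordination of the three scales (separation of centers, diameter of clusters, required color-separation $L$) together with the need to perform every step --- the sparse set, the clustering with tie-breaking, the coloring of the center graph --- as a genuine $\Aut(T_d)$-factor of i.i.d.; none of the individual steps is deep, but they must be assembled so that the deterministic part of the argument (triangle inequalities) closes and the probabilistic part never leaves the factor-of-i.i.d.\ category.
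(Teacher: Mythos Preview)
Your proposal has two concrete problems, and even once they are repaired the whole cluster detour becomes unnecessary.

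First, the step ``color the centers so that centers within distance $R$ get different colors, as a factor of i.i.d.'' is exactly the statement you are trying to prove (with $R$ in place of $L$). You notice this yourself when you say it can be done ``e.g.\ by iterating Lemma~\ref{lm:max_sparse_set}''. But that iteration \emph{is} the paper's proof of Lemma~\ref{lm:max_sparse_coloring}: repeatedly apply Lemma~\ref{lm:max_sparse_set} to the still-uncolored vertices to peel off color classes one at a time, and observe that after $n$ rounds every radius-$L$ ball around an uncolored vertex must contain all $n$ colors, so the process terminates once $n$ exceeds $|B_L|$. If you already accept this, you can run it directly on $T_d$ and skip the entire center/cluster apparatus. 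The alternative you mention, a ``direct greedy argument on the i.i.d.\ labels'', does not obviously produce a factor of i.i.d.: in an infinite graph the greedy color at a vertex may depend on an unbounded chain of earlier decisions.

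Second, your ``local address'' is not injective. The isomorphism type of the path from $w$ to a vertex in $C_w$ only records the length of that path (the tree is homogeneous), so all vertices of $C_w$ at the same distance from $w$ receive the same address; two such vertices can easily be within distance $L$ of each other, so the composite color fails the separation requirement even inside a single cluster. This is fixable by using the i.i.d.\ labels to totally order the (finite) cluster in an $\Aut(T_d)$-equivariant way, but you did not do that.

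In short: the skeleton can be made to work, but only by invoking (in the center-coloring step) the very argument the paper uses directly; and the within-cluster addressing as written is wrong. The paper's proof avoids both issues by iterating Lemma~\ref{lm:max_sparse_set} on $T_d$ itself, with no clustering.
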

\begin{example} \label{ex:universal_bound}
Given $k$ and $R$, let $C= \left( C_w \right)_{w \in V(T_d)}$ 
be a factor of i.i.d.\ coloring provided by Lemma \ref{lm:max_sparse_coloring} 
for $L = 2R+k$. For a positive integer $N$ 
let $Z_w$, $w \in V(T_d)$, be i.i.d.\ uniform labels on $\{1,2,\ldots,N\}$. 
We set 
$$X_v = \{(C_w,Z_w)~|~w \in B_R(v) \}.$$
Then for vertices $u,v$ at distance $k$ we have 
\begin{equation} \label{eq:mut_inf_appr}
\frac{I(X_u;X_v)}{H(X_v)} = 
\frac{| B_R(u) \cap B_R(v) | }{ | B_R(v) | } + o_N(1) .
\end{equation}
\end{example}
Indeed, $X_v$ can be viewed as the list of
variables $(C_w, Z_w)$, $w\in B_R(v)$, ordered by $C_w$ (which
are all different). This clearly defines an 
$\Aut(T_d)$-factor of i.i.d.\ process $X_v$, $v \in V(T_d)$. 
Conditioned on the coloring process $C$, 
the entropies are easy to compute: 
$$ H( X_v | C ) = | B_R(v) | \log N \quad \mbox{ and } \quad 
H( X_u, X_v | C ) = | B_R(u) \cup B_R(v) | \log N .$$
Since the contribution of the coloring to the entropies 
does not depend on $N$, it gets negligible when $N$ is large enough, 
and \eqref{eq:mut_inf_appr} follows. 

Finally, we prove the two lemmas.
\begin{proof}[Proof of Lemma \ref{lm:max_sparse_set}]
We describe the labeling as the output of a randomized local algorithm,
which is easy to interpret as a factor of i.i.d.\ process. 

In the beginning all labels are undefined. 
The algorithm consists of countably many steps. 
At every odd step every vertex with undefined label
proposes to get a label $1$ with probability $1/2$. 
Suppose that a vertex $v$ proposes to get label $1$. 
If there is no other vertex within distance $L$ of $v$ 
that also proposes to get label $1$, 
then the label of $v$ is fixed, otherwise the proposed label is withdrawn. 
At even steps, undefined vertices check if a label $1$ 
has appeared within distance $L$ and set their own label $0$ if this is the case. 

Note that at an odd step any undefined label gets fixed with probability 
greater than some positive constant $\eps$ depending on $L$.  
It follows that after countably many steps 
all labels will be defined with probability $1$. 
It is easy to verify that the obtained labeling has all the required properties. 
\end{proof}
\begin{proof}[Proof of Lemma \ref{lm:max_sparse_coloring}]
Lemma \ref{lm:max_sparse_set} is used to find vertices with color $1$. A
similar algorithm is applied for color $2$, but now
some vertices already have defined labels when launching the
algorithm. We continue by adding more colors the same way.

After having added $n$ colors this way, 
every ball of radius $L$ around an uncolored vertex 
must contain vertices of each color $1,2,\ldots,n$. 
When $n$ becomes equal to the number of vertices in a ball of radius $L$, 
this is not possible any longer, 
therefore we cannot have any more uncolored vertices at that point, 
meaning that we have colored all vertices 
in the required manner using at most $n$ colors. 
\end{proof}

\subsection{Sharpness of Theorem \ref{thm:fixed_process}}
The next example shows that the bound obtained 
in Theorem \ref{thm:fixed_process} is essentially sharp. 
First we briefly describe the construction. 
We start with an i.i.d.\ process where each label has standard normal distribution. 
Then we take a \emph{linear factor}: each new label is 
some linear combination of the i.i.d.\ labels. 
We will choose the coefficients in a way that the correlation decay 
for the obtained factor process is close to the bound \eqref{eq:corr_decay_for_vertices}. 
Then we take the sign of the label of this factor process at every vertex. 
We will see that for this $\{ \pm 1 \}$-valued process the correlation decays at roughly the same rate. 
However, for symmetric binary variables the mutual information 
is essentially the square of the correlation. 

More precisely, for any $\eps > 0$ we construct a factor of i.i.d.\ process (with two states) 
such that the mutual information for distance $k$ is $\Omega\left( k^{2-\eps} (d-1)^{-k} \right)$. 
\begin{example} \label{ex:fixed_process}
Fix a parameter $\eps>0$. 
Let $Z_w$, $w \in V(T_d)$, be i.i.d.\ standard normal random variables. 
We first define a factor $Y$ of the i.i.d.\ process $Z$ 
by taking linear combinations of $Z_w$ with the following coefficients: 
$$ Y_v \defeq \sum_{w \in V(T_d)} \alpha_{\dist(v,w)} Z_w \mbox{, where } 
\alpha_0=0 \mbox{ and } \alpha_k = \frac{ k^{-\frac{1}{2}-\eps} }{ \sqrt{d-1}^k } \mbox{ for } k \geq 1 .$$
Then apply the sign function at each vertex: 
$$ X_v \defeq \sign(Y_v) .$$ 
\end{example}

Note that $Y_v$ is well defined since the sum of the squares of the coefficients is finite. 
Therefore $Y_v$ is a normal random variable with mean $0$ 
and some positive and finite variance $\gamma = \gamma(\eps)$. 
From this point on $\gamma$ will denote a positive constant 
that depends only on $\eps$ (possibly a different constant at each occurrence). 

Suppose that $u$ and $v$ have distance $k$. 
We denote the unique path connecting them by $u_0=u, u_1, \ldots, u_{k-1}, u_k=v$. 
If we are at vertex $u_j$, $1\le j \le k-1$, and move distance $n$ away from the path, 
then we get to a vertex $w$ for which $\dist(u,w) = j+n$ and $\dist(v,w) = k-j+n$. 
The number of such vertices is clearly $(d-2)(d-1)^{n-1}$. Thus 
$$\cov(Y_u,Y_v) = \sum_{w \in V(T_d)} \al_{\dist(u,w)} \al_{\dist(v,w)}
\geq \frac{\gamma}{\sqrt{d-1}^{k}} \sum_{j=1}^{k-1} \sum_{n=1}^\infty 
(j+n)^{-\frac{1}{2} -\eps} (k-j+n)^{-\frac{1}{2} -\eps} .$$
We ignore the terms for which $j+n < k$ and 
rearrange the rest of the sum grouping the terms based on the value $m \defeq j+n$. 
For a given $m \geq k$ and $j \in \{1, \ldots, k-1\}$ 
we have $n=m-j$ and hence $k-j+n = k+m-2j$. 
Therefore the average of $k-j+n$ for a given $m$ as $j$ runs through $1, \ldots, k-1$ 
is exactly $m$, and consequently the convexity of $x^{-\frac{1}{2}-\eps}$ implies that  
$$ \sum_{j=1}^{k-1} (k+m-2j)^{-\frac{1}{2}-\eps} \ge (k-1) m^{-\frac{1}{2}-\eps} .$$
It follows that 
$$ \cov(Y_u,Y_v) \ge \frac{\gamma (k-1)}{\sqrt{d-1}^{k}} 
\sum_{m=k}^{\infty} m^{-1-2\eps} \ge \frac{\gamma (k-1)}{\sqrt{d-1}^{k}} 
\underbrace{ \int_{k}^\infty x^{-1-2\eps} \, \mathrm{d}x }_{ k^{-2\eps} /(2\eps) }
\ge \frac{\gamma k^{1-2\eps}}{\sqrt{d-1}^{k}} ,$$
and the same is true for $\corr(Y_u,Y_v)$ (again with a different $\gamma$). 
Note that there exist constants $0<\gamma_1<\gamma_2$ such that 
for any $W,W'$ jointly normal random variables we have
$$ \gamma_1 \big| \corr(W, W') \big| \le 
\big| \corr(\sign(W), \sign(W')) \big| \le 
\gamma_2 \big| \corr(W, W') \big| .$$
This means that we get the same correlation 
(up to a constant factor) after taking the sign of $Y$:
$$\corr(X_u,X_v)\ge \frac{\gamma k^{1-2\eps}}{\sqrt{d-1}^{k}}.$$
Now working with symmetric binary variables, elementary computations
show that when $P(X_u=X_v)$ is close to $1/2$, we have
$$ \gamma_1 \left| P(X_u= X_v) - \frac{1}{2} \right| \leq 
\big| \corr(X_u, X_v) \big| \leq 
\gamma_2 \left| P(X_u= X_v) - \frac{1}{2} \right| ,$$
and
$$ \gamma_1 \left| P(X_u= X_v) - \frac{1}{2} \right|^2 \leq 
\big| I(X_u; X_v) \big| \leq 
\gamma_2 \left| P(X_u= X_v) - \frac{1}{2} \right|^2 $$
for some constants $0< \gamma_1 < \gamma_2$. It follows that 
$$I(X_u;X_v) \ge \frac{\gamma k^{2-4\eps}}{(d-1)^{k}} ,$$
which indeed confirms that the bound 
in Theorem \ref{thm:fixed_process} is essentially sharp.


\bibliographystyle{plain}
\bibliography{refs}

\end{document}